\theoremstyle{plain}
\newtheorem{theorem}{Theorem}[section]
\newtheorem{proposition}[theorem]{Proposition}
\newtheorem{lemma}[theorem]{Lemma}
\newtheorem{question}[theorem]{Question}
\theoremstyle{definition}
\newtheorem{definition}[theorem]{Definition}
\newtheorem{observation}[theorem]{Observation}
\theoremstyle{remark}
\newtheorem{remark}[theorem]{Remark}
\newtheorem{example}[theorem]{Example}
\DeclareFontFamily{U}{wncy}{}
    \DeclareFontShape{U}{wncy}{m}{n}{<->wncyr10}{}
    \DeclareSymbolFont{mcy}{U}{wncy}{m}{n}
    \DeclareMathSymbol{\Sha}{\mathord}{mcy}{"58}
\DeclareMathOperator{\SL}{SL}
\DeclareMathOperator{\Ind}{Ind}
\def\R{{\mathbb{R}}}
\def\Z{{\mathbb{Z}}}
\def\F{{\mathbb{F}}}
\def\Q{{\mathbb{Q}}}
\def\Qbar{{\overline{\Q}}}
\def\C{{\mathbb{C}}}
\def\P{{\mathbb{P}}}
\def\Pdual{{\check{\mathbb{P}}}}
\DeclareMathOperator{\ord}{ord}
\DeclareMathOperator{\divi}{div}
\def\calA{{\mathcal{A}}}
\def\V{{\mathcal{V}}}
\DeclareMathOperator{\Vis}{Vis}
\DeclareMathOperator{\Br}{Br}
\DeclareMathOperator{\Gal}{Gal}
\DeclareMathOperator{\Pic}{Pic}
\DeclareMathOperator{\Jac}{Jac}
\DeclareMathOperator{\Res}{Res}
\DeclareMathOperator{\Cl}{Cl}
\newcommand{\Mod}[1]{\ (\mathrm{mod}\ #1)}
\newcommand{\disc}{\mathrm{disc}}
\newcommand{\shiva}[1]{{\textcolor{red}{Shiva: #1}}}
\newenvironment{barinder}
{\color{orange}\par\noindent\textbf{Barinder: }}
{\par}
\begin{document}
\title{On the Visibility category of the Shafarevich--Tate group}

\author[B.S. Banwait]{Barinder S. Banwait}
\address{Barinder S. Banwait\\
London\\
United Kingdom
}
\email{barinder.s.banwait@gmail.com}

\author{Jerson Caro}
\address{Jerson Caro\\
Departamento de Matemáticas\\
Universidad de los Andes\\
Bogotá\\
Colombia
}
\email{jl.caro10@uniandes.edu.co}

\author{Shiva Chidambaram}
\address{Shiva Chidambaram\\
Department of Mathematics \& Statistics\\  
Madison Wisconsin\\
USA}
\email{chidambaram3@wisc.edu}

\subjclass[2020]{Primary 11G05; Secondary 11G10, 11Y16} %

\begin{abstract}
    Given an elliptic curve $E$ over $\Q$ and a nontrivial element $\sigma$ of its Shafarevich--Tate group $\Sha(E)$, we introduce the \textbf{Visualization category} $\V(E; \sigma)$ of abelian varieties that ``visualize'' $\sigma$ in the sense of Mazur, and we study minimal objects in this category. In particular, we show that there can be several minimal visualizing abelian varieties of different dimensions, answering a question of Mazur. We revisit two constructions of visualizing abelian varieties: restriction of scalars (as in the work of Agashe and Stein), and a construction due to de Jong (as in the work of Cremona and Mazur). We show that restriction of scalars typically produces minimal visualizations. When $\sigma$ has order $2$ or $3$, we build upon the de Jong construction and make it totally explicit. While the de Jong construction can produce non-minimal objects, an appropriate choice in the construction for order $2$ elements $\sigma$ yields an explicit genus $2$ curve whose Jacobian is a minimal visualization. For order $3$ elements we apply our algorithmic construction to Fisher's database of such elements, and obtain computational evidence that, in the absence of a $3$-isogeny, the de Jong construction yields a minimal visualization.
\end{abstract}

\maketitle

\section{Introduction}

\subsection{Background to Visibility}

Associated to an elliptic curve $E$ over a number field $K$ are two groups of particular arithmetic interest: the \textsf{Mordell--Weil group} $E(K)$, and the \textsf{Shafarevich--Tate group} $\Sha(E/K)$ (henceforth denoted $\Sha(E)$ if the base field is understood). Although these two groups are intimately connected, it is easier to exhibit elements of $E(K)$ than it is to exhibit elements of $\Sha(E)$. This was the motivation for Mazur \cite{Mazur} to introduce the notion of \emph{visualizing} the elements of $\Sha(E)$. We briefly recall the definitions.

A \textsf{torsor} (or \textsf{principal homogeneous space}) under $E$ is a pair $(C, \mu)$ where $C$ is a genus-$1$ curve, and $\mu : E \times C \to C$ is a morphism inducing a simply transitive action on $\overline{K}$-points. We may view $C$ as a twist of $E$ by a cocycle taking values in $E$ (acting on itself by translations) and thus $C$ may be regarded as an element of the \textsf{Weil--Ch\^{a}telet group} $H^1(K,E)$. The Shafarevich--Tate group $\Sha(E)$ is then the subgroup of $H^1(K,E)$ consisting of torsors that are everywhere locally soluble; that is, the curve $C$ has a $K_v$-point for all places $v$.

Mazur sought to more concretely ``visualize'' the curve $C$ as a subcurve of an abelian variety. Specifically, if we have an inclusion $\iota: E \to A$ of $E$ into an abelian variety $A$, and we regard $C$ as the twist of $E$ by the element $\sigma \in H^1(K,E)$, then we may consider the commutative diagram
\[
\begin{tikzcd}
E \arrow[r] \arrow[d, "\sigma" left, "\cong" right] & A \arrow[d, "\cong" left, "\iota_\ast(\sigma)" right] \\
C \arrow[r] & V.
\end{tikzcd}
\]
Here, $\iota_\ast : H^1(K,E) \to H^1(K,A)$ is the natural push-forward map, and $V$ is the twist of $A$ by $\iota_\ast(\sigma)$. This yields an inclusion $C \to V$, and one might say that $C$ is \textsf{visible} in $V$. In the situation where in fact $V$ is merely the trivial twist of $A$ (hence isomorphic to $A$ itself), one thus says that $C$ is visible in $A$, and we define the subgroup of $H^1(K,E)$ \textsf{visible in $A$} as \[ \Vis_AH^1(K,E) := \ker\left(H^1(K, E) \overset{\iota_\ast}\longrightarrow H^1(K, A)\right). \]

By considering Galois cohomology of the exact sequence \[ 0\to E \to A \to B \to 0,\] where $B$ is the quotient abelian variety $A/E$, two equivalent formulations of visibility are readily obtained, namely that $C$ is visible in $A$ if and only if
\begin{enumerate}
    \item $C$ is a translate of $E$ inside $A$ by a point in $A(\overline{K})$ that maps to a $K$-rational point on $B$;
    \item $C$ is the fiber above a $K$-rational point on $B$ under the projection $A \to B$, i.e., the Mordell-Weil group $B(K)$ \emph{explains} $C$.
\end{enumerate}

We define $\Vis_A\Sha(E) := \Vis_AH^1(K,E) \cap \Sha(E)$ to be the elements of $H^1(K,E)$ visible in $A$ that are also everywhere locally soluble.

Cremona and Mazur \cite{CremonaMazur} proved that every element $\sigma$ of $H^1(K,E)$ is visible in some abelian variety, via a construction (to be found in Remark 1/Theorem 2 of \emph{loc. cit.}) that they attribute to Johan de Jong in Remark 3 of \emph{loc. cit.}; we thus refer to this as the \emph{de Jong construction} throughout this paper. We note that this construction does not give an explicit visualizing abelian variety.

This gives rise to the question of identifying the \emph{smallest} such visualizing abelian variety. Most of the work in this area up to now has taken the interpretation of ``smallest'' to mean, for a given $\sigma \in \Sha(E)$, (a) finding the minimum of the dimensions of the abelian varieties that visualize $\sigma$, known in the literature as the \textsf{visibility dimension} of $\sigma$; and (b) explicitly exhibiting a visualizing abelian variety of this visibility dimension. \Cref{tab:literature_review} gives a brief literature review of the works in this area up to now.

\begin{table}[htbp]
\centering
\caption{Literature review on Visibility of $\Sha$ up to 2018}
\label{tab:literature_review}
\begin{tabularx}{\textwidth}{l X} 
\toprule
\textbf{Authors (year)} & \textbf{Summary of work} \\
\midrule
Mazur (1999) \cite{Mazur} & Elements in $\Sha(E)[3]$ have visibility dimension 2 \\
\addlinespace 
Cremona--Mazur (2000) \cite{CremonaMazur} & Proved every element $\sigma \in H^1(K,E)$ is visible in some abelian variety (via a method attributed to de Jong); and computational investigation into visualizing elements of $\Sha$ in abelian surfaces\\
\addlinespace
Klenke (2001) \cite{Klenke} & Elements in $H^1(K,E)[2]$ have visibility dimension 2 \\
\addlinespace
Agashe--Stein (2002) \cite{AgasheStein} & Explicit bound on visibility dimension of any $\sigma \in H^1(K,E)$ \\
\addlinespace
Bruin (2004) \cite{Bruin04} & Explicit construction of visualizing genus $2$ Jacobians for elements in $\Sha(E)[2]$ \\
\addlinespace
Bruin--Dahmen (2010) \cite{BruinDahmen} & Explicit construction of visualizing genus $2$ Jacobians for elements in $\Sha(E)[3]$ \\
\addlinespace
Fisher (2014) \cite{Fisher} & First examples of \emph{invisibility}: exhibited elements in $\Sha$ of order 6 and 7 that cannot be visualized in an abelian surface \\
\addlinespace
Fisher (2016) \cite{Fisher_vis7} & Explicit constructions of visualization in abelian threefolds\\
\addlinespace
Bruin--Fisher (2018) \cite{BruinFisher} & Examples of elements of $\Sha(E)[4]$ invisible in \emph{principally polarised} abelian surfaces\\
\bottomrule
\end{tabularx}
\end{table}

\subsection{The Visibility Category}

In this paper, however, we take a more categorical approach to the question of what the ``smallest'' visualizing abelian variety could be, one that was suggested to us by Mazur. To describe this, we make the following definition.

\begin{definition}
    Let $A$ be an abelian variety over a number field $K$, and let $\sigma \in \Sha(A)$. 
    We define the \textsf{$\sigma$-visibility category} $\mathcal{V}(A/K, \sigma)$ 
    to be the category with objects and morphisms as follows:

    \begin{list}{}{\leftmargin=3.5em \itemindent=0em \labelsep=1em}
        \item[\textbf{Objects:}] Pairs $(B/K, \iota)$ consisting of
        \begin{itemize}
            \item an abelian variety $B$ over $K$,
            \item an inclusion $\iota : A \hookrightarrow B$,
        \end{itemize}
        such that $\sigma$ is visible in $B$.

        \medskip

        \item[\textbf{Morphisms:}] Injective homomorphisms of abelian varieties 
        $B \to B'$ that are compatible with the inclusions 
        $\iota$ and $\iota'$, that is, commutative diagrams \[
    \begin{tikzcd}
    B \arrow[hookrightarrow, rr] && B' \\
    & A. \arrow[hookrightarrow, ul] \arrow[hookrightarrow, ur]
    \end{tikzcd}
        \]
    \end{list}

We similarly define, for $n \geq 2$ an integer, $\mathcal{V}(A/K, n)$ to be the \textsf{$n$-visibility category}, where we further ask that the objects visualize all elements in $\Sha(A)[n]$ (and not merely a given one $\sigma$).
\end{definition}

One may then consider the smallest visualizing abelian varieties to be the minimal objects in these categories; that is, we say that $(B/K, \iota)$ is \textbf{minimal} if, for every morphism \[
    \begin{tikzcd}
    B_0 \arrow[hookrightarrow, rr] && B \\
    & A, \arrow[hookrightarrow, ul] \arrow[hookrightarrow, ur]
    \end{tikzcd}
        \]
the homomorphism $B_0 \to B$ is an isomorphism of abelian varieties.

It is easily established (see \Cref{sec:prelims}) that both of these categories are nonempty, and furthermore that in both of these categories, if the quotient abelian variety $B/A$ is simple over $K$, then $B$ is a minimal visualization in the respective category.

\subsection{Some fundamental questions about minimal objects in the Visibility category}

We consider the following natural questions about minimal objects in the visibility category, posed to us by Mazur.

\begin{question}\label{q:different_minima}
    Can we find different minimal objects in each of $\mathcal{V}(A/K, \sigma)$ and $\mathcal{V}(A/K, n)$? If so, how different can they be, and can the dimension of these minimal objects be different?
\end{question}

\begin{question}\label{q:explicit_minimal_constructions}
    Of the various constructions of visualizing abelian varieties in the literature (see \Cref{tab:literature_review}), do any readily yield minimal objects? 
\end{question}

The next question is prompted by Mazur's observation from \cite{Mazur} that, unusually often, if an abelian variety visualizes one element of a given order $n$, then it visualizes all of $\Sha(E)[n]$\footnote{This is written in the Introduction of \cite{Mazur} as ``Quite often, for a given elliptic curve $E$, it was even the case that all the elements of its Shafarevich--Tate group were representable by curves of genus $1$ in a \emph{single abelian surface} ...'' (emphasis added).}.

\begin{question}\label{q:difference_between_categories}
    Can it happen that there is an object in $\mathcal{V}(A/K, \sigma)$ that is not an object in $\mathcal{V}(A/K, n)$?
\end{question}

\subsection{Answers to the fundamental questions about minimal objects in the Visibility category}

In this paper we address the aforementioned questions in the following way. We answer \Cref{q:different_minima} affirmatively by exhibiting an elliptic curve that admits two different minimal visualizations of $\Sha(E)[5]$ of dimensions 2 and 3.

\begin{theorem}\label{thm:different_minima}
    Let $E$ be the elliptic curve with LMFDB label \href{https://www.lmfdb.org/EllipticCurve/Q/161472/bz/1}{\texttt{161472.bz.1}}: \[ y^2=x^3-x^2-169321x-28379327.\]

    \begin{enumerate}
        \item\label{item:2d_miminal_vis} There is an abelian surface $A$ over $\Q$ that visualizes $\Sha(E)[5]$, and we have \[ A \cong (E \times F) / \Delta, \] where $F$ is the elliptic curve \[ y^2 = x^3 - x^2 - 2285384981x - 42050896792563, \] and $E[5] \cong \Delta \cong F[5]$, diagonally embedded in each component of $E \times F$.
        \item\label{item:3d_miminal_vis} There is an abelian threefold $B$ over $\Q$ that visualizes $\Sha(E)[5]$, and we have \[ B \cong (E \times \Jac(C))/\Delta, \] where $C$ is the genus~$2$ curve given by \[ -2y^2 = x^6 + 2x^4 + 12x^3 + 5x^2 + 6x + 1,\] and $E[5]$ is isomorphic to a subgroup $\Delta'$ of $\Jac(C)[5]$, and where we regard $\Delta$ as diagonally embedded in each component of $B$.
        \item Both $A$ and $B$ are minimal in $\mathcal{V}(E/\Q, 5)$.
    \end{enumerate}    
\end{theorem}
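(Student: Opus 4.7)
The proof proceeds in three stages corresponding to the three assertions of the theorem.

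For parts (1) and (2), we first establish the $G_\Q$-module isomorphism $E[5] \cong F[5]$ (for the surface construction) and a $G_\Q$-module embedding $E[5] \hookrightarrow \Jac(C)[5]$ (for the threefold construction). The first is a $5$-congruence between elliptic curves over $\Q$, locatable in existing databases of such congruences or verified directly by comparing the Galois actions on the $5$-torsion subgroups. The second requires a $5$-torsion computation on $\Jac(C)$. With these identifications fixed, we form $A = (E \times F)/\Delta$ and $B = (E \times \Jac(C))/\Delta$, where $\Delta$ is in each case the graph of the relevant map on $5$-torsion; both $A$ and $B$ contain $E$ via the first factor.

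To verify visibility, we consider the short exact sequences $0 \to E \to A \to Q_A \to 0$ and $0 \to E \to B \to Q_B \to 0$, with quotients $Q_A \cong F/F[5] \cong F$ (the second isomorphism being multiplication by $5$) and $Q_B \cong \Jac(C)/\Delta'$. From the long exact sequence in Galois cohomology, $\Vis_A H^1(\Q,E)$ equals the image of the connecting map $\delta_A : Q_A(\Q) \to H^1(\Q, E)$, and this can be made completely explicit as $\delta_A(P)(\sigma) = \pm\,\iota^{-1}\bigl(\sigma(P/5) - P/5\bigr)$ for any choice of fifth root $P/5 \in F(\overline{\Q})$ and the isomorphism $\iota : E[5] \xrightarrow{\sim} F[5]$. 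The visibility claim then reduces to computing $\Sha(E)[5]$ by explicit $5$-descent on $E$, computing the Mordell--Weil groups of $F$ and of $\Jac(C)$, and verifying by computer algebra that every nontrivial element of $\Sha(E)[5]$ arises as $\delta_A$ (respectively $\delta_B$) of a rational point. This visibility verification is the main obstacle; its feasibility rests on carefully pinning down the isomorphism of $5$-torsion Galois modules used in the construction, after which the matching of $\Sha$-classes with connecting-map images is a direct calculation.

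For part (3), we invoke the minimality criterion recorded in \Cref{sec:prelims}: it suffices to show that the quotient $B_0/E$ is $\Q$-simple. In the surface case, $A/E \cong F$ is an elliptic curve and hence automatically simple. In the threefold case, $B/E \cong \Jac(C)/\Delta'$; since an isogeny quotient of a $\Q$-simple abelian variety is itself $\Q$-simple, it suffices to verify that $\Jac(C)$ is $\Q$-simple, which can be confirmed by computing $\End_\Q(\Jac(C)) = \Z$ or by ruling out elliptic isogeny factors of $\Jac(C)$.
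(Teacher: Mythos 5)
Your overall architecture --- glue $E$ to $F$ (resp.\ to $\Jac(C)$) along $5$-torsion, verify visibility via the quotient, and deduce minimality from $\Q$-simplicity of the complement --- matches the paper's. The substantive divergence is in how visibility of all of $\Sha(E)[5]$ is verified. You propose to compute $\Sha(E)[5]$ by explicit $5$-descent and then match each class against the image of the connecting map $\delta_A \colon F(\Q) \to H^1(\Q,E)$ point by point. The paper instead invokes Fisher's counting criterion (\Cref{thm:fisher_2.2}): since $E(\Q)=0$, the Tamagawa numbers of $E$, $F$ and $\Jac(C)$ are coprime to $5$, and everything in sight has good reduction at $5$, one obtains isomorphisms $F(\Q)/5F(\Q) \cong \Vis_A\Sha(E)$ and $\Jac(C)(\Q)/\psi^{-1}(\ldots) \cong \Vis_B\Sha(E)$, so the whole verification collapses to Mordell--Weil computations: $F$ has rank $2$ and $\Jac(C)$ has rank $4$ with real multiplication by $\sqrt{5}$, giving visible subgroups of order $25 = \#\Sha(E)[5]$. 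This sidesteps both the explicit $5$-descent and the cocycle matching, which are exactly the steps you identify as the main obstacle; your route is logically sound, but as stated it would be very hard to carry out, whereas Fisher's theorem is what makes the example checkable in practice. It also explains why the paper's search filters candidate curves by Tamagawa numbers and reduction at $5$ --- hypotheses that play no role in your version but are essential to the counting argument.

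Two smaller points. First, your suggestion to certify simplicity of $\Jac(C)$ by checking $\End_\Q(\Jac(C)) = \Z$ cannot work here: the Galois-stable $2$-dimensional subgroup $\Delta' \subset \Jac(C)[5]$ isomorphic to $E[5]$ arises precisely because $\Jac(C)$ has real multiplication by $\sqrt{5}$ (one takes $\Delta' = \Jac(C)[\sqrt{5}]$), so $\End_\Q(\Jac(C))$ is an order in $\Q(\sqrt{5})$. Your fallback --- ruling out elliptic isogeny factors, e.g.\ by exhibiting a Frobenius $L$-polynomial that is irreducible of degree $4$ --- is the correct check and is what the paper's assertion that $\Jac(C)$ is simple amounts to. Second, both arguments require $\#\Sha(E)[5] = 25$ as an input: the paper takes this from the LMFDB, while in your plan it would come out of the $5$-descent (using that $E$ has rank $0$ and no rational $5$-torsion, so that the $5$-Selmer group coincides with $\Sha(E)[5]$).
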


Finding explicit examples of the sort given in \Cref{thm:different_minima} required several inputs:

\begin{itemize}
    \item Obtaining from the LMFDB \cite{lmfdb} a list of elliptic curves $E$ with $\Sha(E)[5] \ne \{0\}$, satisfying certain other favourable conditions coming from a result of Fisher \cite{Fisher_vis7}.
    
    \item For each curve $E$ found in the previous step, generating genus $2$ curves $C$ over $\Q$ whose Jacobian has real multiplication by $\sqrt{5}$ and $\Jac(C)[\sqrt{5}] \cong E[5]$; the main input here is the explicit models for genus $2$ Jacobians with RM by $\sqrt{5}$ \cite{cowan2024generic}. This provides the abelian variety $B$ as in part~\ref{item:3d_miminal_vis}, though it isn't guaranteed that $E$ will satisfy part~\ref{item:2d_miminal_vis}.

    \item From the candidate pairs $(E,C)$ from the output of the last step, we look for a $5$-congruent elliptic curve $F$ by using a result of Rubin--Silverberg \cite{rubin1995families}, such that the abelian surface $(E \times F)/\Delta$ (with $\Delta \simeq E[5] \simeq F[5]$ diagonally embedded in $E \times F$) satisfies the conditions of a result of Fisher \cite[Theorem 2.2]{Fisher_vis7} (which is a reformulation of a result of Agashe and Stein \cite[Theorem 3.1]{AgasheStein}).
\end{itemize}

Regarding \Cref{q:explicit_minimal_constructions}, we study the restriction of scalars method (that was first used in the context of visibility by Agashe and Stein \cite{AgasheStein}) more closely, and show that it provides explicit minimal objects in $\V(E/K, \sigma)$ for a given $\sigma \in \Sha(E)[n]$, for any $n$.


\begin{theorem}\label{thm:restrictionofscalars_minimal_generaln}
    Let $\sigma \in \Sha(E/K)$ be a nontrivial element of order $n$. 
    Then there exists a degree-$n$ extension $L/K$ such that the Weil restriction
    $A \coloneqq \Res_{L/K}(E_L)$
    is a minimal object in the visibility category $\mathcal{V}(E/K,\sigma)$.
\end{theorem}


On the other hand, Cremona and Mazur establish the existence of a curve $X$ -- which we call a \emph{de Jong curve} -- whose Jacobian $\Jac(X)$ visualizes $\sigma$. As they admit in their paper, their method ``does not allow us easy viewing of the curves of genus 1 that are generated''. Moreover, the curve $X$ implicitly depends on the choice of an element in a certain Azumaya algebra $\calA_\sigma$, and different choices can drastically affect the geometry of $X$. We explicitly construct a de Jong curve of genus $4$, which is not minimal in $\V(E/K, \sigma)$. 

\begin{theorem}\label{thm:cremona_mazur_curves_can_have_large_genus}
Let $E$ be the elliptic curve \href{https://www.lmfdb.org/EllipticCurve/Q/571/b/1}{571.b1}.
It is known that $\Sha(E) \cong (\Z/2\Z)^2$, and the binary quartics associated to the elements of $\Sha(E)$ as computed in \cite[Section 15.1]{Fisher_hess12} are:
\begin{align}
\begin{split}
\label{eq:binary_quartics_example}
\psi_1 &= -4x^4 - 60x^3z - 232x^2z^2 - 52xz^3 - 3z^4\\
\psi_2 &= -11x^4 - 68x^3z - 52x^2z^2 + 164xz^3 - 64z^4\\
\psi_3 &= -15x^4 - 52x^3z + 38x^2z^2 + 144xz^3 - 115z^4\\
\psi_4 &= -19x^4 + 112x^3z - 142x^2z^2 - 68xz^3 - 7z^4,
\end{split}
\end{align}
with $\psi_3$ corresponding to the trivial element. For $1 \leq i \leq 4$, let $\sigma_i \in \Sha(E)[2]$ denote the element corresponding to the binary quartic $\psi_i$.

Then $\sigma_1 \in \Sha(E)$ admits a de Jong curve $X$ of genus $4$ such that the Jacobian $\Jac(X)$ is not minimal in $\V(E, \sigma_1)$.

\end{theorem}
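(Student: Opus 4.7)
The strategy is to realize the Cremona--Mazur construction completely explicitly for $E = $ \texttt{571.b1} and $\sigma_1$, arrange for an unfortunate choice of Azumaya-algebra element to yield a curve $C$ of genus $4$, and then to exhibit a proper visualizing subvariety of $\Jac(C)$ containing $E$.

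First, I would recall the Cremona--Mazur recipe for order-$2$ elements. Given $\sigma \in H^1(\Q, E)[2]$ represented by a binary quartic $\psi$, the associated torsor is $C_\sigma: w^2 = \psi(x,z)$, and there is an attached Azumaya (quaternion) algebra $\calA_\sigma$ over the function field $\Q(C_\sigma)$. A choice of element $f \in \calA_\sigma$ produces a cover $C \to C_\sigma$ whose Jacobian contains $E$ and visualizes $\sigma$. For the quartic $\psi_1 = -4x^4 - 60x^3z - 232x^2z^2 - 52xz^3 - 3z^4$, I would fix a particular $f$, write the resulting cover $C \to C_{\sigma_1}$ in explicit affine coordinates, and verify via Riemann--Hurwitz (cross-checked with Magma) that $g(C) = 4$. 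The Cremona--Mazur theorem then guarantees that $(\Jac(C), E \hookrightarrow \Jac(C))$ is an object of $\V(E, \sigma_1)$.

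To show non-minimality, I invoke \Cref{thm:agashe_stein_minimal}: since $\sigma_1$ has order $2$, there is an abelian surface $A_{\min} = \Res_{L/\Q}(E_L)$ which is a minimal object of $\V(E, \sigma_1)$ of dimension $2$. The plan is to use the concrete geometry of the cover $C \to C_{\sigma_1}$ to decompose $\Jac(C)$ up to isogeny, and in particular to exhibit an intermediate abelian variety $B_0$ with $E \hookrightarrow B_0 \hookrightarrow \Jac(C)$, compatible with the inclusions from the Cremona--Mazur construction, such that $B_0$ still kills $\sigma_1$ -- equivalently, such that the rational point on $\Jac(C)/E$ witnessing the visibility of $\sigma_1$ already lies in the image of $B_0/E$. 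Such a $B_0$, necessarily of dimension less than $4$ and expected to coincide with $A_{\min}$ up to isogeny, produces a non-isomorphism $B_0 \to \Jac(C)$ compatible with both inclusions of $E$, demonstrating non-minimality.

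The main obstacle is producing the cover $C \to C_{\sigma_1}$ concretely enough both to pin down the genus and to exhibit the decomposition of $\Jac(C)$ required in the last step. This is where the explicit rendering of the Cremona--Mazur construction promised in the Introduction plays a decisive role, since the original formulation in \cite{CremonaMazur} sits at the abstract level of Azumaya algebras and does not on its own produce curves one can compute with.
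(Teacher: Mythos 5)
Your high-level plan (build an explicit genus-$4$ Cremona--Mazur curve for $\sigma_1$, then exhibit a proper visualizing subvariety of its Jacobian) matches the paper's, but there are two genuine problems. First, a misstatement of the construction: the Azumaya algebra lives over $\Q(E)$, not over $\Q(C_{\sigma_1})$, and the curve $C$ it produces is a cover of $E$, not of the torsor $C_{\sigma_1}$. Concretely, the paper finds the identity $x^3-26x^2+233x-704=((x-5)i+(x-9)j+4k)^2$ in $\calA_{\psi_1}$ and takes $C$ to be the double cover of $E$ cut out by $\sqrt{g(x)}$, ramified at the six points of $E$ above the three roots of $g$; Riemann--Hurwitz then gives $g(C)=4$. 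Your proposal defers exactly this explicit choice, which is the content that makes the genus computation and everything downstream possible.

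Second, and more seriously, your non-minimality argument does not go through as stated. The existence of a minimal object $A_{\min}=\Res_{L/\Q}(E_L)$ elsewhere in $\V(E,\sigma_1)$ says nothing about whether $\Jac(C)$ is minimal: minimality is tested against sub-abelian varieties of $\Jac(C)$ containing $E$ (compatibly with $\iota$), and there is no reason for $A_{\min}$ to embed into $\Jac(C)$ at all -- indeed \Cref{thm:different_minima} shows minimal objects of different dimensions can coexist. What is actually needed, and what the paper supplies, is the isogeny decomposition $\Jac(C)\sim E\times E_1\times\Jac(C_1)$ coming from the biquadratic function-field diagram (with $E_1:y^2=g(x)$ and $C_1:y^2=g(x)f_E(x)$), together with a proof that $\sigma_1$ is visible in the proper $3$-dimensional subvariety $A=(E\times\Jac(C_1))/\Delta$. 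That last step is not the heuristic ``the witnessing point already lies in $B_0/E$''; it is the computation $\Vis_{\Jac(C)}H^1(\Q,A)=0$, obtained from the exact sequence of \Cref{lemma:fisher_2.1} applied to $E_1\subset\Jac(C)$ and the fact that $E_1$ is \texttt{26.a3} with $E_1(\Q)\cong\Z/3\Z$, so $E_1(\Q)/2E_1(\Q)=0$. By \Cref{obs1} this forces $\Vis_AH^1(\Q,E)=\Vis_{\Jac(C)}H^1(\Q,E)\ni\sigma_1$, whence non-minimality. Without the explicit decomposition and this Mordell--Weil input, your proposal is a strategy outline rather than a proof.
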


Despite this example, we show that it is possible to make an appropriate choice of an element in the Azumaya algebra $\calA_\sigma$ that yields, in the case that $\ord(\sigma) = 2$, a genus $2$ de Jong curve whose equation we can write down explicitly and whose Jacobian is a minimal object visualizing $\sigma$. We acheive this by using the explicit description of the Azumaya algebra $\calA_\sigma$ given by Fisher \cite{Fisher_alge19}. This establishes, in the order $2$ case, a guess of Mazur that for every $\sigma \in \Sha(E)$, the de Jong construction ought to yield minimal elements in $\V(E, \sigma)$.

\begin{theorem}\label{thm:explicit_cremona_mazur_for_n_2}
    Let $E/K$ be an elliptic curve and $0 \neq \sigma \in \Sha(E)[2]$ be represented by the binary quartic
    \begin{align}
    \label{eq:general_binary_quartic}
        ax^4 + bx^3z + cx^2z^2 + dxz^3 + ez^4, \quad \text{with} \quad 8a^2d - 4abc + b^3 \ne 0.
    \end{align}
    Let $f(x) = x^3 + cx^2 - (4ae - bd)x - 4ace + b^2 e + a d^2$ so that $y^2=f(x)$ is a model for $E$.
    Then the Jacobian of the genus $2$ curve $X$ given by the equation \[ X: y^2 = f\left(\frac{b^2 - 4a(c - x^2)}{4a} \right) \] is a $2$-dimensional minimal visualization of $\sigma$. In particular, if $E(K)/2E(K) = \{0\}$, then the complementary elliptic curve factor appearing in $\Jac(X)$ given by
    \begin{align*}
    \begin{split}
        &F : y^2 = x^3 + px^2 + qx + r, \quad \text{where}\\
        &p = -64a^3e + 16a^2bd + 16a^2c^2 - 16ab^2c + 3b^4\\
        &q = (-8ac + 3b^2)(8a^2d - 4abc + b^3)^2\\
        &r = (8a^2d - 4abc + b^3)^4
    \end{split}
    \end{align*}
    has Mordell-Weil rank at least $1$, and its Mordell-Weil group explains $\sigma \in \Sha(E)[2]$.
\end{theorem}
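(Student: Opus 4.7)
My plan is to verify the geometric structure of the proposed genus-$2$ curve $C$, identify it with a Cremona--Mazur curve via an explicit choice in Fisher's Azumaya algebra, deduce minimality from a dimension argument, and finally extract the Mordell--Weil assertion about $F$ from the visibility exact sequence combined with a $2$-descent.

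First, the substitution $u = (b^2 - 4ac - x^2)/(4a)$ is a degree-$2$ map $\P^1_x \to \P^1_u$, so $\phi\colon(x,y)\mapsto(u,y)$ realizes $C$ as a smooth hyperelliptic double cover of $E\colon y^2=f(u)$; since $f(u(x))$ is a separable degree-$6$ polynomial, $C$ has genus~$2$ and $\phi^\ast$ embeds $E\hookrightarrow\Jac(C)$ as an abelian subvariety. The complementary factor $F:=\Jac(C)/\phi^\ast(E)$ is the Prym variety for the additional involution $\tau\colon(x,y)\mapsto(-x,y)$, which commutes with the hyperelliptic involution. A direct change-of-variables computation on the Weierstrass model of this Prym yields the explicit $(p,q,r)$ claimed; in particular, the factorizations $r=\Delta^4$ and $q=(3b^2-8ac)\Delta^2$ with $\Delta:=8a^2d-4abc+b^3$ exhibit a distinguished rational point $(0,\Delta^2)\in F(K)$.

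Next, to identify $C$ as a Cremona--Mazur curve for $\sigma$, I invoke Fisher's explicit realization \cite{Fisher_alge19} of the Azumaya algebra $\mathcal{A}_f$ attached to $E$. The Cremona--Mazur construction depends on a choice of element $\alpha\in\mathcal{A}_f$, and I will exhibit a canonical element coming from the Hessian-type quadratic covariant of the binary quartic $\psi$ representing $\sigma$ whose associated Cremona--Mazur curve is precisely $C$. Minimality of $\Jac(C)$ in $\mathcal{V}(E/K,\sigma)$ then follows immediately from the observation in the preliminaries: since the quotient $\Jac(C)/E\cong F$ is an elliptic curve, hence $K$-simple, $\Jac(C)$ is a minimal object.

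For the final assertion, the long exact sequence of Galois cohomology attached to $0\to E\to\Jac(C)\to F\to 0$ yields a connecting map $\delta\colon F(K)\to H^1(K,E)$ whose image is $\Vis_{\Jac(C)}H^1(K,E)\ni\sigma$, and I will identify the rational point $(0,\Delta^2)\in F(K)$ as a point satisfying $\delta((0,\Delta^2))=\sigma$. To deduce that $F$ has positive Mordell--Weil rank under the hypothesis $E(K)/2E(K)=0$, I argue by contradiction: were $\rk F(K)=0$, then $F(K)$ would be finite and so would $\Jac(C)(K)$, making $\Jac(C)(K)/2\Jac(C)(K)$ finite and controllable through the Kummer sequences of $E$, $\Jac(C)$, and $F$ together with the exact sequence $0\to E[2]\to\Jac(C)[2]\to F[2]\to 0$. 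The hypothesis $E(K)/2E(K)=0$ forces any lift of $\sigma$ to $H^1(K,E[2])$ to land in the image of $F[2](K)$, which in turn forces an incompatibility with the explicit form of the $2$-torsion cubic $x^3+px^2+qx+r$. Thus $F$ has rank at least~$1$, and the non-torsion point $(0,\Delta^2)$ pushes forward under $\delta$ to explain $\sigma$.

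The principal obstacle is matching the abstract Cremona--Mazur construction with the concrete Weierstrass equation claimed in the theorem. This requires carefully tracking Fisher's explicit description of $\mathcal{A}_f$ and showing that the binary-quartic coefficients $(a,b,c,d,e)$ determine a canonical element of $\mathcal{A}_f$ whose associated curve has the stated form.
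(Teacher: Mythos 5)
Your outline follows the same route as the paper (Haile--Han/Fisher Azumaya algebra, a quadratic splitting field realized as $K(C)$, minimality from the simplicity of the elliptic quotient, and Fisher's visibility lemma for the rank claim), but it has a genuine gap at the one step that actually carries the content of the theorem: you never produce the element of $\mathcal{A}_f$ whose square generates the quadratic extension, and hence never prove that $\sigma$ is visible in $\Jac(C)$. You defer this to ``a canonical element coming from the Hessian-type quadratic covariant'' and then flag it yourself as ``the principal obstacle.'' The paper resolves it with a one-line computation in the even Clifford algebra $\calA_+$ of $Q_\xi$: the element $b-2u_1u_2$ satisfies $(b-2u_1u_2)^2=b^2-4ac-4a\xi$, so $\calA_\sigma$ contains, and therefore splits over, $L=K(E)\bigl(\sqrt{b^2-4ac-4a\xi}\bigr)$ (by \cite[Lemma 5.4.7]{Voight}); setting $x^2=b^2-4ac-4a\xi$ identifies $L$ with the function field of the stated curve $C$, which is exactly where the formula $y^2=f\bigl((b^2-4ac-x^2)/(4a)\bigr)$ comes from. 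Without this (or an equivalent) identity, your ``$C$ is a Cremona--Mazur curve for $\sigma$'' is an assertion, not a proof, and the minimality and rank statements that depend on visibility of $\sigma$ in $\Jac(C)$ do not follow. (Your remark about Hessians is also off-target: the covariants of a binary quartic are the quartic Hessian and the sextic covariant; the relevant object is the distinguished central element $\xi=s^2-c$ and the even part of the Clifford algebra, not a quadratic covariant.)

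Two smaller points. First, your final paragraph is more roundabout than necessary: once visibility is known, \Cref{lemma:fisher_2.1} gives $F(K)/2F(K)\cong\Vis_{\Jac(C)}H^1(K,E)\ni\sigma\neq 0$ directly (the hypothesis $E'(K)/\phi E(K)=0$ coming from $E(K)/2E(K)=0$), and $F(K)[2]\cong E(K)[2]=0$ then forces $\rk F(K)\geq 1$; there is no need for a contradiction argument through the Kummer sequences. Second, your claim that the specific point $(0,\Delta^2)\in F(K)$ is non-torsion and satisfies $\delta\bigl((0,\Delta^2)\bigr)=\sigma$ is an additional unverified assertion; it is not needed for the statement (which only requires $\sigma$ to lie in the image of $F(K)$ under the connecting map), and you give no argument for it. The identification of $F$ itself via the Prym/Richelot structure is fine in spirit; the paper cites \cite[Theorem 14.1.1]{cassels_flynn_1996} for the isogeny $\Jac(C)\sim(E\times F)/\Delta$ glued along $2$-torsion.
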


\begin{remark}
    \label{rmk:model_dependence}
    In \Cref{thm:explicit_cremona_mazur_for_n_2}, the genus $2$ de Jong curve $X$ and the second elliptic curve factor $F$ in $\Jac(X)$ are \emph{dependent} on the binary quartic model chosen to represent $\sigma$.
\end{remark}

\begin{remark}
    \Cref{thm:explicit_cremona_mazur_for_n_2} applies more generally to any smooth curve $C/K$ of genus $1$ given as a double cover of $\P^1$ ramified at $4$ points, as long as the curve has order $2$ in the Weil-Chatelet group $H^1(K,\Jac(C))$ and the binary quartic satisfies the condition in \Cref{eq:general_binary_quartic}. That is, the curve $C$ need not be locally soluble.
\end{remark}

\begin{remark}
    Klenke was the first to prove visibility of elements of $H^1(K,E)[2]$ in abelian surfaces. Thereafter, Bruin \cite{Bruin04} showed that elements of $\Sha(E)[2]$ can be visualized in abelian surfaces formed by gluing $E$ with suitable quadratic twists. In comparison, \Cref{thm:explicit_cremona_mazur_for_n_2} explicitly realizes visibility in abelian surfaces for a subset of $H^1(K,E)[2]$ containing $\Sha(E)[2]$. Moreover, the abelian surfaces are formed by gluing $E$ with another elliptic curve $F$, which in general is not a quadratic twist of $E$.
\end{remark}

For order $3$ elements in $\Sha(E)$, in the case where $E$ does not admit a rational $3$-isogeny, we give an explicit de Jong curve of genus $4$ in \Cref{subsec:explicit_cremona_mazur_for_n_3}. Using Fisher's database \cite{Fisher_Database_Order3Sha}, we verified for each of the $90{,}932$ ternary cubics arising from such curves of conductor $N<300{,}000$ that the Jacobian of this de Jong curve yields a minimal visualization, establishing the following theorem.

\begin{theorem}\label{thm:minimality_CremonaMazur_order3}
Let E be an elliptic curve of conductor less than $300{,}000$ that does not admit a rational $3$-isogeny, and for which $\Sha(E)[3] \neq \left\{0\right\}$. Then for every ternary cubic representing a pair of non-zero elements $\{ \pm \sigma \}$ in $\Sha(E)[3]$, the Jacobian of the de Jong curve constructed in $\Cref{subsec:explicit_cremona_mazur_for_n_3}$ is a minimal visualization of $\sigma$ and $-\sigma$.
\end{theorem}

This is evidence towards Mazur's guess that the de Jong construction ought to yield minimal visualizations for any $\sigma \in \Sha(E)$. We emphasize that this \textbf{remains open} for $|\sigma| \geq 3$.

Finally, we answer \Cref{q:difference_between_categories} in the affirmative, by providing an explicit example of an object in $\V(E, \sigma)$ that is not an object in $\V(E,n)$ where $n=\ord(\sigma)$.

\begin{theorem}\label{thm:categories_not_equal}
    We work with the example and notations introduced in \Cref{thm:cremona_mazur_curves_can_have_large_genus}.
    Let $X_2$ be the genus 2 curve given by \Cref{thm:explicit_cremona_mazur_for_n_2} whose Jacobian minimally visualizes $\sigma_2$. Then $\Jac(X_2) \in \V(E, \sigma_2)$ but $\Jac(X_2) \notin \V(E, 2)$.
\end{theorem}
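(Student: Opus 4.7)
The first statement, $\Jac(C_2) \in \V(E, \sigma_2)$, is immediate from \Cref{thm:explicit_cremona_mazur_for_n_2} applied to the binary quartic $\psi_2$. To prove $\Jac(C_2) \notin \V(E, 2)$, I would show that $\sigma_1$ (equivalently $\sigma_4$) is not visible in $\Jac(C_2)$, so that $\Vis_{\Jac(C_2)} \Sha(E)[2] \subsetneq \Sha(E)[2]$.

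The key tool is the connecting homomorphism $\delta : F(\Q) \to H^1(\Q, E)$ coming from the short exact sequence $0 \to E \to \Jac(C_2) \to F \to 0$, where $F$ is the complementary elliptic curve factor produced by \Cref{thm:explicit_cremona_mazur_for_n_2} with $(a,b,c,d,e) = (-11,-68,-52,164,-64)$. The image of $\delta$ is $\Vis_{\Jac(C_2)} H^1(\Q, E)$ and its kernel is the image of $\Jac(C_2)(\Q) \to F(\Q)$. I would first use the formulas of \Cref{thm:explicit_cremona_mazur_for_n_2} to write $F$ down as an explicit Weierstrass model, and then carry out a $2$-descent (for example with Magma's \texttt{TwoDescent}) to verify that $\rank F(\Q) = 1$ and $F(\Q)_{\mathrm{tors}} = 0$; write $F(\Q) = \Z \cdot P_0$. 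The LMFDB entry for $E=$ 571.b1 gives $E(\Q)=0$, so $\Jac(C_2)(\Q) \hookrightarrow F(\Q)$, and since $\Jac(C_2)$ is $\Q$-isogenous to $E \times F$ this inclusion has rank $1$; its image therefore equals $\Z \cdot nP_0$ for some integer $n \geq 1$, and $\mathrm{Im}(\delta) \cong \Z/n\Z$. The visible $2$-torsion $\mathrm{Im}(\delta) \cap H^1(\Q, E)[2]$ then has order $\gcd(n, 2) \leq 2$; since $\sigma_2 \neq 0$ is already visible, $n$ must be even and the visible $2$-torsion is exactly $\{0, \sigma_2\}$. In particular $\sigma_1, \sigma_4 \notin \Vis_{\Jac(C_2)} \Sha(E)[2]$, proving $\Jac(C_2) \notin \V(E, 2)$.

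The main obstacle is the concrete Mordell--Weil computation for $F$: the whole argument reduces to a cheap parity statement once $\rank F(\Q) = 1$ and $F(\Q)_{\mathrm{tors}} = 0$ are in hand. If either of these failed, one would instead have to rule out the visibility of $\sigma_1$ and $\sigma_4$ directly, for instance via a $2$-Selmer or Cassels--Tate pairing analysis on the specific model for $F$.
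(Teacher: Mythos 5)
Your proposal is correct and follows essentially the same route as the paper: both reduce the claim to the computation that the complementary elliptic factor $F$ (identified in the paper as the curve 27408.d1, with $F(\Q)\cong\Z$) gives $\Vis_{\Jac(C_2)}H^1(\Q,E)\cong F(\Q)/2F(\Q)$ of order $2$, which must therefore equal $\{0,\sigma_2\}$. The paper invokes \Cref{lemma:fisher_2.1} directly where you argue via the parity of the index $n$ of the image of $\Jac(C_2)(\Q)$ in the quotient's Mordell--Weil group, but these are the same connecting-homomorphism argument with the same computational input.
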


A number of our constructions -- particularly the proof of \Cref{thm:different_minima} -- require searching for suitable objects in the LMFDB \cite{lmfdb} database of elliptic curves and genus 2 curves, and executing SageMath \cite{sagemath} and Magma \cite{magma} commands on the results. As such, our work is supported by a code repository that is available here:
    \begin{center}
    \url{https://github.com/shiva-chid/VisualizingShaE/}
    \end{center}
See the \texttt{README.md} there for instructions on how to use the code to verify the results in this paper. All filenames given in the paper are relative to the top level of this repository.

The paper is organised as follows. \Cref{sec:prelims} goes over some background material needed throughout, notably recalling some of Fisher's work on this topic. \Cref{thm:different_minima} is proved in \Cref{sec:different_minima}, \Cref{thm:restrictionofscalars_minimal_generaln} is proved in \Cref{sec:restiction_minimality}, \Cref{thm:explicit_cremona_mazur_for_n_2,thm:minimality_CremonaMazur_order3} are proved in \Cref{sec:explicit_cremona_mazur_for_n_2_3}, and the details of \Cref{thm:cremona_mazur_curves_can_have_large_genus,thm:categories_not_equal} are given in \Cref{sec:cremona_mazur_curves_can_have_large_genus}.

\subsection{Acknowledgements}

The authors are deeply indebted to Barry Mazur, who defined the categories we introduce in this paper, and posed the fundamental questions about them, which gave our work motivation and direction. His encouragement and patient responses to several queries during the process have been both invaluable and inspirational to us.
We thank Tom Fisher for extensive feedback on an earlier version of the manuscript, which has greatly improved the results and exposition.
The authors also thank Jennifer Balakrishnan, Alex Bartel, Nils Bruin, Jordan Ellenberg, and John Voight for helpful correspondence and comments.


\section{Preliminaries}\label{sec:prelims}

This section collects together more detailed background on the de Jong construction \cite{CremonaMazur} and the restriction of scalars method in the context of visibility \cite{AgasheStein}.

\subsection{de Jong construction} Let $K$ be a number field, let $E/K$ be an elliptic curve, and let
$\sigma \in H^1(K,E)$. 

\begin{proposition}[{\cite[Proposition 1]{CremonaMazur}}]\label{prop:visibility}
There exists an abelian variety $J$ over $K$
containing $E$ as an abelian subvariety such that $\sigma$ is visible in $J$.
\end{proposition}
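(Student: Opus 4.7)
I would construct $J$ as a Weil restriction. First, choose a finite Galois extension $L/K$ splitting $\sigma$, i.e.\ with $\Res_{L/K}(\sigma) = 0$ in $H^1(L, E_L)$. Such an $L$ exists because $H^1(K,E)$ is the direct limit of the finite Galois cohomology groups $H^1(\Gal(L/K), E(L))$ under inflation, so any cocycle representing $\sigma$ already comes from a finite level and becomes a coboundary over $L$. Then set
\[
J := \Res_{L/K}(E_L),
\]
which is an abelian variety over $K$ of dimension $[L:K]$.

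The inclusion $\iota : E \hookrightarrow J$ arises from the unit of the Weil restriction adjunction: the identity $E_L \to E_L$ corresponds to a $K$-morphism $E \to \Res_{L/K}(E_L)$. That $\iota$ is a closed immersion can be checked after base change to $L$, where it becomes the diagonal embedding $E_L \hookrightarrow \prod_{\tau \in \Gal(L/K)} E_L^{\tau}$.

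The essential input is Shapiro's lemma, which supplies a canonical isomorphism $H^1(K, J) \cong H^1(L, E_L)$ that intertwines the pushforward $\iota_* : H^1(K,E) \to H^1(K, J)$ with the restriction map $\Res_{L/K} : H^1(K, E) \to H^1(L, E_L)$. Since $\sigma$ lies in the kernel of the latter by construction, it lies in $\ker \iota_* = \Vis_J H^1(K,E)$, which is precisely the statement that $\sigma$ is visible in $J$.

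The main obstacle I anticipate is carefully justifying the intertwining in Shapiro's lemma: that the pushforward along $\iota$ corresponds to the restriction to $L$. This is a standard but slightly technical verification, done either by a diagram chase at the level of cocycles or, more conceptually, via the derived adjunction between the restriction-of-scalars and Weil-restriction functors. A secondary, more routine, check is that $\iota$ is a closed immersion rather than just a monomorphism, which is immediate from the diagonal description after base change. Neither difficulty involves $E$ in an essential way, so the resulting $J$ will have dimension $[L:K]$; for $\sigma \in \Sha(E)[n]$, one can often choose $L$ of degree $n$ (the Agashe--Stein bound), which motivates the later refinements in the paper.
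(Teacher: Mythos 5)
Your proof is correct, but it is not the route the paper takes for this proposition. The paper proves \Cref{prop:visibility} via the Cremona--Mazur construction: $\sigma$ is represented by a rank-$n^2$ Azumaya algebra $\calA_\sigma$ over the function field $K(E)$, one finds a maximal commutative subalgebra of degree $n$ corresponding to a degree-$n$ cover $\pi\colon C \to E$ that is totally ramified at some point, so that $\pi^*\colon E \to \Jac(C)$ is injective, and the splitting of $\calA_\sigma$ over this subalgebra shows $\sigma$ dies in $\Jac(C)$. What you have written is instead the Weil-restriction argument, which the paper attributes to Agashe--Stein and presents as a \emph{separate} construction in its preliminaries. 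Your version is, if anything, more elementary than either: you do not need Cassels' theorem (which Agashe--Stein use to get a splitting field of degree exactly $n$, hence $\dim J = n$), only the fact that any continuous cocycle dies over some finite extension, at the cost of losing control of $\dim J = [L:K]$. The trade-off with the paper's approach is that the Cremona--Mazur construction produces a visualizing Jacobian tied to the geometry of the torsor (which the paper then exploits to get explicit genus-$2$ and genus-$4$ curves in later sections), whereas the Weil restriction is cleanly functorial but gives less geometric information; the paper needs both constructions downstream, which is why it keeps them distinct. Your identification of the Shapiro-lemma intertwining as the one point requiring care is accurate, and your closed-immersion check via base change to $L$ is the standard one.
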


We provide a brief sketch of how the variety $J$ is obtained, since it will be built upon in the sequel.

Let $n$ be the order of $\sigma$. The class $\sigma$ may be represented by an
Azumaya algebra $\calA_\sigma$ of rank $n^2$ over the function field
$F = K(E)$. There is a maximal commutative subalgebra
$L \subset \calA_\sigma$ of degree $n$ over $F$, such that the field extension $L/F$
corresponds to a degree $n$ morphism $\pi \colon X \to E$
of smooth projective curves, which is totally ramified at at least one point of $E$.

The existence of such a ramification implies that the induced map on Jacobians $\pi^\ast \colon E \to \Jac(X)$
is injective. Moreover, by construction, the Azumaya
algebra $\calA_\sigma \otimes_F L$ splits, so the class $\sigma$ becomes trivial over $L$. Equivalently, $\sigma$ is visible in $\Jac(X)$.

In \cite{CremonaMazur}, the existence of a maximal commutative subalgebra
$L \subset \calA_\sigma$ with such characteristics is proven. Following the proof, this degree $n$ extension of $K(E)$ is not unique and non-explicit. Now, we follow \cite{Fisher_alge19} in making the Azumaya algebra $\calA_\sigma$ explicit in the cases $n=2$ and $n=3$.

\noindent
\textbf{Explicit construction of Azumaya algebras for $\bf{n=2}$.} Let $0 \ne \sigma \in \Sha(E)[2]$ and let
\[
f(x,z) = ax^4 + bx^3z + cx^2z^2 + dxz^3 + ez^4 \in K[x,z]
\]
be a binary quartic associated with $\sigma$.
Following Haile and Han \cite{HaHa07}, one associates to $\sigma$ a $K$-algebra $\calA_\sigma$
defined by generators and relations encoding the formal identity
\[
(\alpha^2 r + \alpha\beta s + \beta^2 t)^2 = f(\alpha,\beta).
\]

More precisely, $\calA_\sigma$ is the associative $K$-algebra generated by
$r,s,t$ with relations
\[
r^2 = a,\qquad rs + sr = b,\qquad rt + tr + s^2 = c,\qquad
st + ts = d,\qquad t^2 = e.
\]
These relations ensure that the above quadratic identity holds
identically in $\alpha, \beta$.

A key feature of this construction is the existence of a natural
central element
\[
\xi = s^2 - c \in Z(\calA_\sigma).
\]
Working over the polynomial ring $K[\xi]$, the algebra $\calA_f$ may be
identified with the Clifford algebra associated to a ternary quadratic
form
\[
Q_\xi(x,y,z) = ax^2 + bxy + cy^2 + dyz + ez^2 + \xi(y^2 - xz).
\]

Geometrically, this quadratic form arises from the standard embedding
of the curve $C_f : y^2 = f(x,z)$ into $\P^3$, where the image
is cut out by a pencil of quadrics whose generic member is given by
\[
x_4^2 = Q_\xi(x_1,x_2,x_3).
\]
This interpretation links the algebra $\calA_\sigma$ directly to the geometry
of the genus one curve defined by $f$.

\noindent
\textbf{Explicit construction of Azumaya algebras for $\bf{n=3}$.} Let $0 \ne \sigma \in \Sha(E)[3]$, and let
\[
f(x,y,z)=ax^3+by^3+cz^3+a_2x^2y+a_3x^2z+b_1xy^2+b_3y^2z
+c_1xz^2+c_2yz^2+mxyz
\]
be a ternary cubic defined over $K$ associated with $\sigma$.

Following Kuo \cite{Kuo11}, one associates to $\sigma$ a noncommutative $K$-algebra $\calA_\sigma$
defined as follows.
Assuming $c\neq 0$, the algebra $\calA_\sigma$ is generated by two elements $x$ and $y$,
subject to relations obtained from the formal identity
\[
f(\alpha,\beta,\alpha x+\beta y)=0
\]
in the variables $\alpha$ and $\beta$.
Equivalently, $\calA_\sigma$ is the quotient of the free associative algebra $K\{x,y\}$
by the ideal generated by the four cubic relations arising from equating
coefficients in this identity.
Although the defining relations appear asymmetric, the resulting algebra
depends functorially on the ternary cubic $f$ and behaves well under changes
of coordinates.

The algebra $\calA_\sigma$ is an Azumaya algebra over its center, and its center is canonically isomorphic to the coordinate ring of $E\setminus\{0\}$.
In particular, after base change to the function field $K(E)$,
the algebra $\calA_\sigma \otimes_Z K(E)$ becomes a central simple algebra over $K(E)$,
and hence defines a Brauer class on $E$.

This construction admits a concrete cohomological interpretation.
Let $L/K$ be a cyclic extension of degree $3$ over which the plane cubic curve $C_f : f = 0$
acquires a rational point.
After possibly replacing $K$ by a bi-quadratic extension, one may assume that $K$ contains a primitive cube root of unity $\zeta_3$ and that
$L=K(\sqrt[3]{a})$.
In this setting, the algebra $\calA_\sigma\otimes_Z K(E)$ may be described explicitly
as a cyclic algebra $(L/K,\, g)$,
where $g\in K(E)^\times$ is a rational function whose divisor is
\begin{equation}\label{eq:de_jong_3_divisor}
\operatorname{div}(g)
=(P)+(\tau P)+(\tau^2 P)-3(0),
\end{equation}
for a suitable point $P\in E(L)$ arising from the Galois action of $\Gal(L|K) = \langle \tau \rangle$ on points in $C_f(L)$.

\subsection{Restriction of scalars}

Agashe and Stein \cite[Proposition 2.4]{AgasheStein} used restriction of scalars to prove that every element of the Shafarevich--Tate group of an elliptic curve of order $n$ can be visualized inside an abelian variety of dimension $n$. In this subsection, we briefly recall the construction.

Let $E$ be an elliptic curve over a number field $K$, and let
$c \in \Sha(E)$ be an element of order $n$. A key input is a classical
result of Cassels \cite{Cassels62}, which asserts that
there exists a finite extension $L/K$ with $[L:K]=n$ such that $\operatorname{res}_L(c)=0 \in H^1(L,E)$. An alternative proof was later given by
O'Neil \cite{Oneil2002}.

Once such an extension is fixed, one applies the restriction of scalars
construction. Let $J \coloneqq \operatorname{Res}_{L/K}(E_L)$,
which is an abelian variety over $K$ of dimension $n$. The functorial
properties of Weil restriction yield a natural closed immersion
$E \hookrightarrow J$. Moreover, by Shapiro's lemma, there is a canonical
identification
\[
H^1(K,J) \cong H^1(L,E),
\]
under which the image of $c$ is identified with $\operatorname{res}_L(c)$.
Since $\operatorname{res}_L(c)=0$, the class $c$ lies in the kernel of the map
$H^1(K,E) \to H^1(K,J)$, and hence is visible in $J$. This shows that any element of $\Sha(E)$ of order $n$ is visualized in an
abelian variety of dimension $n$.

\subsection{Non-emptiness of the Visibility categories}\label{subsec:vis_cats_nonempty}

The constructions of de Jong and restriction of scalars ensure that the $\sigma$-visibility category $\mathcal{V}(A/K,\sigma)$ has a nonempty set of objects. From this, it easily follows that for every $n \ge 2$, the visibility category
$\mathcal{V}(E/K,n)$ has a nonempty set of objects. Indeed, let $\sigma_1,\sigma_2 \in \Sha(E/K)[n]$ and choose
$B_i \in \mathcal{V}(E/K,\sigma_i)$ for $i=1,2$. Consider the abelian variety $B$
obtained by gluing $B_1$ and $B_2$ along their common abelian subvariety $E$. Then the subgroup $\langle \sigma_1,\sigma_2 \rangle \subset \Sha(E)$ is contained in the kernel of the map $\Sha(E) \longrightarrow \Sha(B)$. Since $\Sha(E/K)[n]$ is finite, the result follows.

\subsection{Reinterpreting finiteness of \texorpdfstring{$\Sha(E/K)$}{Sha} in terms of visibility}

An interesting reframing of the finiteness of $\Sha(E)$ follows readily from existing results, which we feel is of value to write down explicitly. We do this in the more general context of an abelian variety. 

\begin{proposition}
    Let $A$ be an abelian variety over a number field $K$. Then $\Sha(A/K)$ is finite if and only if there exists an abelian variety $B$ in which every element of $\Sha(A/K)$ becomes visible.
\end{proposition}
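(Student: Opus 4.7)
The plan is to prove both directions by combining the non-emptiness argument of \Cref{subsec:vis_cats_nonempty} with the Mordell--Weil theorem and the torsion nature of $H^1(K,A)$.

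For the forward direction ($\Sha(A/K)$ finite implies the existence of such a $B$), I would argue exactly as in \Cref{subsec:vis_cats_nonempty} but iterated over a finite generating set. Concretely, since $\Sha(A/K)$ is finite, choose generators $\sigma_1,\dots,\sigma_r$. Applying the Cremona--Mazur construction to each $\sigma_i$ produces an abelian variety $B_i$ containing $A$ in which $\sigma_i$ is visible. Glue these along $A$ iteratively to obtain an abelian variety $B$ with $A \hookrightarrow B$. The key observation is that the set of visible classes $\Vis_B H^1(K,A) = \ker(H^1(K,A) \to H^1(K,B))$ is a subgroup of $H^1(K,A)$, so containing each generator $\sigma_i$ forces it to contain all of $\Sha(A/K)$.

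For the reverse direction, suppose $A \hookrightarrow B$ is given with $\Sha(A/K) \subset \ker(H^1(K,A) \to H^1(K,B))$. Letting $C \coloneqq B/A$, the short exact sequence $0 \to A \to B \to C \to 0$ yields a long exact sequence in Galois cohomology
\[
C(K) \xrightarrow{\ \delta\ } H^1(K,A) \longrightarrow H^1(K,B),
\]
so $\ker(H^1(K,A) \to H^1(K,B)) = \delta(C(K))$ is a quotient of $C(K)$. By the Mordell--Weil theorem, $C(K)$ is finitely generated, hence so is this kernel. On the other hand, $H^1(K,A)$ is a torsion abelian group (a standard fact for abelian varieties over number fields, deduced from the Kummer sequences attached to multiplication by $n$), and therefore $\Sha(A/K)$ is torsion. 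Being a torsion subgroup of a finitely generated abelian group, $\Sha(A/K)$ must be finite.

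The argument is essentially formal once one has the non-emptiness construction in hand, and I do not see a genuine obstacle. The only mildly delicate point is recalling the two background inputs used in the reverse direction: the Mordell--Weil theorem applied to $B/A$ (not to $A$ itself), and the torsionness of $H^1(K,A)$, both of which are standard. I would state them explicitly to keep the short proof self-contained.
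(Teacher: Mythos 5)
Your proof is correct and follows essentially the same route as the paper: the forward direction iterates the gluing construction of \Cref{subsec:vis_cats_nonempty} over a finite generating set, and the reverse direction identifies the visible subgroup with a quotient of $C(K)$ via the long exact sequence, then combines Mordell--Weil with the torsionness of $H^1(K,A)$ to conclude finiteness. No substantive differences from the paper's argument.
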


\begin{proof}
    If $\Sha(A/K)$ is finite, then by the argument in \Cref{subsec:vis_cats_nonempty}, we obtain an abelian variety $B$ that visualizes all of $\Sha(A/K)$. Conversely, if such a $B$ exists, then we have $\Sha(A/K) =  \Vis_B(\Sha(A/K)) \subset \Vis_B(H^1(K, A))$, and this latter group is finite, since it fits into the exact sequence \[ 0 \to B(K)/A(K) \to C(K) \to \Vis_B(H^1(K, A)) \to 0 \] (where $C := B/A$ is the quotient abelian variety) and is thus simultaneously torsion and the surjective image of a finitely generated group.
\end{proof}

\subsection{Auxiliary Lemmas on visibility}\label{subsec:main_setup}

In this section we review some basic facts about visibility. We start with the following straightforward observations.

\begin{observation}\label{obs1}
Suppose that we have a tower of abelian varieties $E\subset A\subset B$. Then, we have the following maps:
\[
H^1(K,E)\longrightarrow H^1(K,A)\longrightarrow H^1(K,B).
\]
This implies that 
$\Vis_{A} H^1
(K, E)\subset \Vis_{B} H^1
(K, E).$ 
We have equality
\begin{align*}
    \Vis_{A} H^1(K, E)= \Vis_{B} H^1(K, E)
\end{align*}
whenever $H^1(K,A)\longrightarrow H^1(K,B)$ is injective, or equivalently, when $\Vis_{B} H^1
(K, A)=(0)$.    
\end{observation}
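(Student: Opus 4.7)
The plan is to unwind the definitions and apply functoriality of $H^1(K,-)$. By definition, $\Vis_A H^1(K,E)$ is the kernel of the pushforward $H^1(K,E) \to H^1(K,A)$ induced by $E \hookrightarrow A$, while $\Vis_B H^1(K,E)$ is the kernel of the composition $H^1(K,E) \to H^1(K,A) \to H^1(K,B)$ coming from the tower $E \hookrightarrow A \hookrightarrow B$. With these descriptions in hand, the inclusion $\Vis_A H^1(K,E) \subset \Vis_B H^1(K,E)$ is a one-line diagram chase: any class $\sigma$ whose image in $H^1(K,A)$ is zero certainly has trivial image in $H^1(K,B)$ after applying the second map.

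For the equality statement, I would assume $H^1(K,A) \to H^1(K,B)$ is injective and take an arbitrary $\sigma \in \Vis_B H^1(K,E)$. Writing $\tau$ for the image of $\sigma$ in $H^1(K,A)$, the commutativity of the tower gives $\tau \mapsto 0$ in $H^1(K,B)$, and injectivity then forces $\tau = 0$. Hence $\sigma \in \Vis_A H^1(K,E)$, which combined with the first inclusion yields the claimed equality.

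Finally, the equivalence between the injectivity of $H^1(K,A) \to H^1(K,B)$ and the vanishing $\Vis_B H^1(K,A) = (0)$ is tautological: by the very definition of visibility applied to the inclusion $A \hookrightarrow B$, the group $\Vis_B H^1(K,A)$ \emph{is} the kernel of $H^1(K,A) \to H^1(K,B)$, and this kernel is trivial precisely when the map is injective. I do not anticipate any genuine obstacle here; the entire observation is a formal consequence of the kernel description of visibility, and the only care needed is to keep track of which kernel is which along the tower.
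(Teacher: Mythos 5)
Your proposal is correct and follows exactly the same reasoning the paper intends for this observation: it is a formal consequence of the kernel description of visibility together with the functorial factorization $H^1(K,E)\to H^1(K,A)\to H^1(K,B)$. No gaps.
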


The following results are taken from \cite[Section 2]{Fisher_vis7} which will be used throughout the paper.

Let $E$ and $F$ be abelian varieties over a number field $K$, with common finite Galois
submodule $\Delta$ of size $d$. Let $A = (E \times F)/\Delta$, where the quotient is by the diagonal embedding of $\Delta$.
Let $F'= A/E$ and $E' = A/F$. There is then a commutative diagram
\begin{align}\label{diagram}
\xymatrix{&&0\ar[d]&&\\
&&F\ar[d]\ar^{\psi}[dr]&&\\
0\ar[r]& E\ar_{\phi}[dr]\ar[r]&A\ar[d]\ar[r]&F'\ar[r]&0\\
&&E'\ar[d]&&\\
&&0&&}    
\end{align}
where the row and column are exact sequences of abelian varieties, and the diagonal maps $\phi$
and $\psi$ are isogenies with kernel $\Delta$.

The following lemma gives a group containing $\Vis_A\Sha(E)$ when $E'(K)/\phi E(K) = 0$, which happens for example when $E$ has rank $0$ and $\#E(K)$ is relatively prime to $\#\Delta$.

\begin{lemma}[{\cite[Lemma 1]{Fisher_vis7}}]
\label{lemma:fisher_2.1}
If $E'(K)/\phi E(K) = 0$ then $F'(K)/\psi F(K) \cong \Vis_A H^1(K, E)$.
\end{lemma}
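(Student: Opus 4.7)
The plan is to use the long exact sequence in Galois cohomology associated to the horizontal short exact sequence $0 \to E \to A \to F' \to 0$ from \eqref{diagram}. This yields
\[
0 \to E(K) \to A(K) \to F'(K) \xrightarrow{\delta} H^1(K,E) \to H^1(K,A),
\]
so by definition of $\Vis_A H^1(K,E) = \ker(H^1(K,E) \to H^1(K,A))$, the connecting map $\delta$ induces an isomorphism
\[
F'(K)/\mathrm{im}(A(K) \to F'(K)) \;\xrightarrow{\sim}\; \Vis_A H^1(K,E).
\]
The problem therefore reduces to identifying the image of $A(K) \to F'(K)$ with $\psi F(K)$ under the hypothesis $E'(K) = \phi E(K)$.

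The inclusion $\psi F(K) \subseteq \mathrm{im}(A(K) \to F'(K))$ is automatic: the map $\psi$ factors as $F \hookrightarrow A \twoheadrightarrow F'$, so every element of $\psi F(K)$ visibly comes from a $K$-point of $A$. For the reverse inclusion, I will diagram-chase using the vertical exact sequence $0 \to F \to A \to E' \to 0$. Given $a \in A(K)$, let $e' \in E'(K)$ denote its image under $A \to E'$. By hypothesis $e' \in \phi E(K)$, so there exists $e \in E(K)$ with $\phi(e) = e'$; let $a_0 \in A(K)$ be the image of $e$ under $E \hookrightarrow A$. Then $a - a_0$ maps to $0$ in $E'(K)$, hence lies in $F(K) \subseteq A(K)$. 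Since $E \hookrightarrow A \twoheadrightarrow F'$ is the zero map, $a_0$ maps to $0$ in $F'(K)$, and therefore the image of $a$ in $F'(K)$ equals the image of $a - a_0$, which is precisely $\psi(a - a_0) \in \psi F(K)$.

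Combining the two inclusions gives $\mathrm{im}(A(K) \to F'(K)) = \psi F(K)$, and feeding this back into the isomorphism from the long exact sequence yields
\[
F'(K)/\psi F(K) \;\cong\; \Vis_A H^1(K,E),
\]
as desired. There is no real obstacle here beyond the diagram chase; the essential point is that the hypothesis $E'(K)/\phi E(K) = 0$ is exactly what is needed to lift arbitrary $a \in A(K)$ to $F(K)$ modulo the subgroup $E(K)$, which in turn controls the image in $F'(K)$.
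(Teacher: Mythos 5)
Your proof is correct and follows essentially the same route as the cited source (Fisher's Lemma 2.1, whose key exact sequence $0 \to A(K)/(E(K)+F(K)) \to F'(K)/\psi F(K) \to \Vis_A H^1(K,E) \to 0$ the paper reproduces later in the proof of \Cref{thm:cremona_mazur_curves_can_have_large_genus}): the connecting map of $0 \to E \to A \to F' \to 0$ identifies $\Vis_A H^1(K,E)$ with $F'(K)/\mathrm{im}(A(K))$, and the hypothesis $E'(K)=\phi E(K)$ forces that image to be $\psi F(K)$. Your direct diagram chase is just a slightly repackaged form of the vanishing of the term $A(K)/(E(K)+F(K))$ in that sequence.
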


The following theorem implies that the inclusion $ \Vis_A \Sha
(E)\to F'(K)/\psi F(K)$ is actually an isomorphism when we assume certain local conditions.

\begin{theorem}[{\cite[Theorem 3.1]{AgasheStein}, \cite[Theorem 2.2]{Fisher_vis7}}]\label{thm:fisher_2.2}
If $E'(K)/\phi E(K) = 0$ and
\begin{itemize}
    \item [(i)] all the Tamagawa numbers of $E$ and $F'$ are coprime to $d$, and
    \item [(ii)] $A$ has good reduction at all places $v\mid d$, and
    \item [(iii)] writing $p$ for the rational prime below $v$, $e(K_v/\Q_p) < p-1$ for all places $v\mid d$,
\end{itemize}
then $F'(K)/\psi F(K) \cong \Vis_A \Sha(K, E)$.
\end{theorem}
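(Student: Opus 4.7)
The plan is to combine Lemma 2.1 with a place-by-place local analysis in order to identify $\Vis_A H^1(K,E)$ with $\Vis_A \Sha(E)$ under the stated hypotheses. From Lemma 2.1, $F'(K)/\psi F(K) \cong \Vis_A H^1(K,E)$, and the tautological inclusion $\Vis_A \Sha(E) \subseteq \Vis_A H^1(K,E)$ reduces the theorem to showing that every class visible in $A$ is everywhere locally trivial.

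Fix a class $\sigma \in \Vis_A H^1(K,E)$, represented by $P \in F'(K)$ via the connecting map of $0 \to E \to A \to F' \to 0$. For each place $v$, the local triviality $\sigma_v = 0 \in H^1(K_v, E)$ is equivalent to the image $P_v \in F'(K_v)$ admitting a lift to $A(K_v)$ under the surjection $A \twoheadrightarrow F'$. Since $\sigma$ is annihilated by $d = \#\Delta$, one may work entirely with the $d$-torsion part of the relevant cohomology groups, and in particular factor the obstruction through $H^1(K_v, \Delta)$ using the exact sequence $0 \to \Delta \to E \times F \to A \to 0$ coming from the diagonal construction of $A$. I would then split by the type of place:
\begin{itemize}
\item At finite places $v$ with $v \nmid d$ and good reduction of both $E$ and $F$, the kernel of reduction is pro-$p$ for the residue characteristic $p \nmid d$, hence prime to $d$; the exactness of $0 \to \widetilde E(k_v) \to \widetilde A(k_v) \to \widetilde{F'}(k_v) \to 0$ on the residue field, lifted via smoothness of the N\'eron models, gives the desired lift of $P_v$ to $A(K_v)$.
\item At places $v$ of bad reduction for $E$ or $F$ with $v \nmid d$, hypothesis (i) says the component groups of the N\'eron models of $E$ and $F$ have order prime to $d$. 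Consequently the $d$-primary obstruction in $H^1(K_v,\Delta)$ coming from the nontrivial components disappears, and one can lift $P_v$ by working on the identity components.
\item At places $v \mid d$, hypotheses (ii) and (iii) combine. Good reduction of $A$ forces good reduction of $E$, $F$, $F'$, and $E'$. The inequality $e(K_v/\Q_p) < p-1$ is the classical bound ensuring that the formal group of $A$ has no $p$-torsion, so $\Delta$ injects into $\widetilde A(\overline{k_v})$. A Hensel-lifting argument applied to the reduction of $P_v$ in $\widetilde{F'}(k_v)$ then produces a lift in $A(K_v)$.
\end{itemize}

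I expect the most delicate case to be $v \mid d$: one has to control the interplay between the Galois module $\Delta$ and the formal group of $A$ over a ramified local field, and it is precisely hypothesis (iii) on the ramification index that rules out unwanted $p$-torsion and allows the reduction-and-lift argument to go through cleanly. Once the three local cases are in place, each $\sigma_v$ vanishes, so $\sigma \in \Sha(E)$, giving $\Vis_A H^1(K,E) \subseteq \Vis_A \Sha(E)$; combined with Lemma 2.1, this yields the claimed isomorphism $F'(K)/\psi F(K) \cong \Vis_A \Sha(E)$.
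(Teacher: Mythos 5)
This statement is quoted verbatim from Fisher \cite[Theorem 2.2]{Fisher_vis7}; the paper offers no proof of it (nor does it intend to), so there is no internal argument to compare against. Your proposal reconstructs the standard Agashe--Stein-style argument on which Fisher's result rests: combine \Cref{lemma:fisher_2.1} with the tautological inclusion $\Vis_A\Sha(E)\subseteq\Vis_A H^1(K,E)$, and then show place by place that every class $\delta(P)$, $P\in F'(K)$, is locally trivial. That is the right strategy, and your observation that $\delta(P)$ factors through $H^1(K,\Delta)$ and is killed by $d$ is correct and is the engine of the whole argument.

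There are, however, gaps in the execution. First, you never treat the archimedean places, which are needed for membership in $\Sha(E)$; the fix is that hypothesis (iii) forces $d$ to be odd (if $2\mid d$ then $e(K_v/\Q_2)<1$ is impossible), so the $2$-torsion group $H^1(K_v,E)$ at a real place contains no nonzero $d$-torsion. Second, your analysis at $v\mid d$ misattributes the role of (iii): once $A$ has good reduction at $v$, the N\'eron models form a smooth exact sequence $0\to\mathcal{E}\to\mathcal{A}\to\mathcal{F}'\to 0$ of abelian schemes over $\O_v$, and surjectivity of $A(K_v)\to F'(K_v)$ already follows from Lang's theorem on the special fibre together with Hensel/formal smoothness, with no input from the ramification bound; the condition $e(K_v/\Q_p)<p-1$ enters the Agashe--Stein proof through torsion/component-group bookkeeping (and over $\Q$ it reduces to $d$ odd), not through the Hensel lift you describe. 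Third, at bad places $v\nmid d$ the crucial step is to show that the obstruction class is \emph{unramified}, i.e.\ lies in $H^1(K_v^{\mathrm{ur}}/K_v,E(K_v^{\mathrm{ur}}))$, a group of order equal to the Tamagawa number $c_v(E)$, which is then coprime to $d$ by (i); your phrase ``the $d$-primary obstruction coming from the nontrivial components disappears'' asserts this rather than proves it, and establishing unramifiedness (lifting $P_v$ over $K_v^{\mathrm{ur}}$) is where the real work, and the Tamagawa hypothesis on $F$ as well as $E$, is needed. As an outline the proposal is on the right track, but these three points must be supplied before it constitutes a proof.
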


\begin{remark}
    This version of \cite[Theorem 3.1]{AgasheStein} is taken from \cite[Theorem 4.1]{frengley2026modular} and accounts for a typographical error in Fisher's version. Note that we have omitted the explicit condition on $d$ being odd since this is implied by hypothesis (iii).
\end{remark}

\section{Minimal visualizations of different dimensions}\label{sec:different_minima}

In this section we prove \Cref{thm:different_minima}; i.e. we show that the elliptic curve \[ E : y^2=x^3-x^2-169321x-28379327, \] which can be found as curve \href{https://www.lmfdb.org/EllipticCurve/Q/161472/bz/1}{\texttt{161472.bz.1}} in the LMFDB, admits minimal visualizations of $\Sha(E)[5]$ of dimensions $2$ and $3$, which are given explicitly as in the statement.

Rather than proceeding with the mere verification that the abelian surface $A$ and abelian threefold $B$ are visualizations of $\Sha(E/\Q)[5]$, we instead present the more interesting details of how we arrived at this particular elliptic curve, thereby providing a commentary to the Magma file \texttt{magma/g2RMapproach.m}.

We start with the question of how to find a nontrivial example of an elliptic curve that admits a $3$-dimensional visualization of all elements of $\Sha(E/\Q)[5]$. The basic approach here is due to Fisher and can be found in \cite[Section 5]{Fisher_vis7}. (Fisher deals with order $7$ elements there but the basic idea applies for any odd prime.)

\subsection*{Step 1. Obtain list of candidate elliptic curves.} 

Clearly we want an elliptic curve $E$ with $\Sha(E/\Q)[5] \neq 0$. However, since we will want to apply \Cref{thm:fisher_2.2}, we also want $E$ to have good reduction at $5$, we want all Tamagawa numbers to be coprime to $5$, and we want $E$ to only have $5$-part of $\Sha$. These curves were downloaded from the LMFDB and are available in the file \texttt{data/ellipticcurvesinfo5.txt}. Each row corresponds to a curve satisfying the above requirements, in the format \texttt{aInvs:cond:label:aps}, where \texttt{aInvs} are the Weierstrass coefficients of the curve, \texttt{cond} is the conductor, \texttt{label} is the LMFDB label, and \texttt{aps} is a list of the trace of Frobenius values $a_p$ for primes $p < 100$.    

\subsection*{Step 2. Obtain list of genus $2$ curves whose Jacobians can be glued with some candidate elliptic curve to obtain $3$-dimensional visualizations.}

The $3$-dimensional visualization we are attempting to construct will arise from the setup of \Cref{subsec:main_setup}; in particular, we will want to glue $E[5]$ to a Galois submodule of the $5$-torsion of the Jacobian of a genus $2$ curve. One way to obtain this is to work exclusively with genus $2$ curves $C$ that admit real multiplication by $\sqrt{5}$, for then $J_C[\sqrt{5}]$ would at least have the correct structure as an abelian group to be able to be isomorphic to $E[5]$ as Galois modules for $E$ an elliptic curve.

To generate these genus $2$ curves, we use \cite[Corollary 7.1]{cowan2024generic} that provides a sextic polynomial $f_5(a,b,c; x)$ over $\Q[a,b,c]$ such that, for generic choices of $a,b,c \in \Q$, the genus $2$ curve $y^2 = f_5(a,b,c; x)$ has RM by $\sqrt{5}$. This polynomial can be found in our repository as \texttt{data/5.txt}.

We specialize $f_5(a,b,c; x)$ at low-height rationals $a,b,c$. For each genus $2$ curve arising in this way, we check if there is an elliptic curve $E$ in the list obtained in Step 1 such that there is a $5$-congruence $J_C[\sqrt{5}] \cong E[5]$. To achieve this, we define, for $p$ a prime, the following quantities:
\begin{align*}
    n_1 &:= \#C(\F_p) & t_p &:= p + 1 - n_1\\
    n_2 &:= \#C(\F_{p^2}) & n_p &:= (n_1^2 + n_2)/2 - (p+1)n_1 - p.
\end{align*}
We then check, for each of the 25 $a_p(E)$ values in the \texttt{aps} column, whether $a_p(E)$ satisfies $x^2 \pm t_px + n_p \equiv 0 \Mod{5}$ for some choice of sign $\pm$. If this is indeed satisfied for some $E$ as in Step $1$, we make two further checks on $C$: (1) it should have good reduction at $5$; (2) its Jacobian should have rank $4$ and no torsion supported at $5$. Condition (1) is needed to ensure \Cref{thm:fisher_2.2} can be applied, and condition $2$ will give that, for $B := (J_C \times E)/\Delta$ for $J_C[\sqrt{5}] \cong \Delta \cong E[5]$, we have $\Vis_B(\Sha(E/\Q)) \cong \Sha(E/\Q) \cong (\Z/5\Z)^2$.

If the pair $(C,E)$ survives all of these conditions, we append it to a running set of \texttt{goodpairs}. Of course, merely checking $25$ $a_p$ values does not guarantee a $5$-congruence; for the pairs in \texttt{goodpairs}, we gain extra confidence by extending the computation for all primes $p < 1000$; pairs that survive this subsequent filtering are added to another running set \texttt{verygoodpairs}. For the pairs in this set, it is very likely that $E[5] \cong J_C[\sqrt{5}]$ as Galois modules up to a possible quadratic twist.

At this point, we take the first pair in this set, which corresponds to the $E$ as in the statement of \Cref{thm:different_minima}, and the curve \[ C : y^2 = x^6 + 2x^4 + 12x^3 + 5x^2 + 6x + 1,\] and we seek to identify the correct quadratic twist. Replacing the check on $a_p(E)$ satisfying $x^2 \pm t_px + n_p \equiv 0 \Mod{5}$ with $a_p(E_d)$ satisfying $x^2 - t_px + n_p \equiv 0 \Mod{5}$ for various choices of $d$ indicates that $E[5] \cong J_{C_{-2}}[\sqrt{5}]$. This can be certified in the same way as described in \cite[Section 6]{Fisher_vis7}, by using the analytic representation of the $\sqrt{5}$-endomorphism, and algebraizing the kernel. (See also \cite[Section 6.4]{frengley2024explicit}.) This explains the $-2$ in the statement of \Cref{thm:different_minima}. Henceforth in this proof, $C$ will denote $C_{-2}$ (i.e. the one written in the statement of the theorem we are currently proving). This is how we identified the curve $E$ and $C$ as in the statement of the theorem.

\subsection*{Step 3. Verify Tamagawa number condition}

Since we have heavily relied upon \Cref{thm:fisher_2.2}, we need to ensure that the Tamagawa numbers of $J_C$ are coprime to $5$. This is achieved by using van Bommel's \texttt{Tamagawa} package \cite{bommel2021efficient} and is carried out in \texttt{magma/verifyThm1.5.m}.

Finally, notice that $B$ is minimal in $\mathcal{V}(E, 5)$ because the Jacobian of $C$ is a simple abelian variety. So $B$ doesn't contain proper subabelian varieties containing $E$. This yields \Cref{item:3d_miminal_vis} of \Cref{thm:different_minima} regarding the $3$-dimensional minimal visualization.

\subsection*{Step 4. Find $2$-dimensional visualization}

For \Cref{item:2d_miminal_vis} of \Cref{thm:different_minima}, the idea is relatively straightforward and similar to above: having $E$ in hand, we want to find a $5$-congruent curve $F$ for which $E$ and $F$ satisfy the conditions of \Cref{thm:fisher_2.2} and such that $\Vis_A(\Sha(E/\Q)) \cong (\Z/5\Z)^2$. This is easier to obtain than the $3$-dimensional visualization since the modular curve $X_E(5)$ has genus 0 and so there are infinitely many $5$-congruent curves; it is just a matter of finding one that satisfies the additional required conditions. Moreover, finding $5$-congruent elliptic curves is conveniently already implemented in Magma as \texttt{HessePolynomials} and \texttt{RubinSilverbergPolynomials} (Rubin and Silverberg gave the first parametrization of $X_E(5)$ in \cite{rubin1995families}, and Fisher gave parametrizations for both symplectic and anti-symplectic congruences using the Hessian in \cite{Fisher_hess12}). The entire implementation that allowed us to find the curve $F$ as in the statement of the theorem is in \texttt{magma/findCongruentCurve.m}, which yields the following result that finishes the proof of \Cref{thm:different_minima}.

\begin{proposition}
    There is an abelian surface $A$ over $\Q$ that visualizes $\Sha(E)[5]$, and we have \[ A \cong (E \times F) / \Delta, \] where $F$ is the elliptic curve \[ y^2 = x^3 - x^2 - 2285384981x - 42050896792563, \] and $E[5] \cong \Delta \cong F[5]$, diagonally embedded in each component of $E \times F$.
\end{proposition}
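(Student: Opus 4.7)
The plan is to apply Fisher's \Cref{thm:fisher_2.2} in the setup of \Cref{subsec:main_setup}, taking $d=5$, $\Delta \cong E[5] \cong F[5]$, and $A = (E \times F)/\Delta$. Concretely, I would proceed in four steps, mirroring the machinery already developed in the excerpt and the discussion at the end of Step~2 and Step~3 above.

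First, I would exhibit an elliptic curve $F/\Q$ that is $5$-congruent to $E$. Since the modular curve $X_E(5)$ has genus $0$, such curves are parametrized by the Rubin--Silverberg family \cite{rubin1995families}, implemented in Magma as \texttt{RubinSilverbergPolynomials}. The candidate $F: y^2 = x^3-x^2-2285384981x-42050896792563$ is produced by specializing this family at a rational parameter and then identifying the correct quadratic twist. To certify that $E[5] \cong F[5]$ as $G_\Q$-modules (and not merely up to a twist), one checks, via comparison of the mod-$5$ traces $a_p(E) \equiv a_p(F) \pmod 5$ for a large set of primes of good reduction, that the $5$-torsion representations agree; this matching is already engineered by the Rubin--Silverberg construction, and it can then be upgraded to an actual isomorphism by the same analytic/algebraization method as in \cite[Section 6]{Fisher_vis7}.

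Second, I would verify the hypotheses of \Cref{thm:fisher_2.2} with $d=5$. The place at $5$ is unramified over $\Q_5$, so $e(\Q_5/\Q_5) = 1 < 4 = p-1$, establishing (iii). For (ii), one checks that both $E$ and $F$ have good reduction at $5$ (immediate from their conductors), whence so does $A$. For (i), one computes the Tamagawa numbers of $E$ and $F$ using standard Tate algorithms in Magma and verifies that none is divisible by $5$; this is the analogue for $F$ of the Tamagawa check already performed for $J_C$ in Step~3. Finally, one verifies $E'(\Q)/\phi E(\Q) = 0$: since $\gcd(\#E(\Q)_{\mathrm{tors}}, 5)=1$ and $E$ has analytic rank $0$ (hence Mordell--Weil rank $0$ by the reasons underlying its inclusion in the list from Step~1), the quotient $E'(\Q)/\phi E(\Q)$ is trivial.

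Third, with the hypotheses in place, \Cref{thm:fisher_2.2} gives the isomorphism
\[
\Vis_A \Sha(E/\Q) \;\cong\; F'(\Q)/\psi F(\Q),
\]
where $F' = A/E$ and $\psi \colon F \to F'$ is the isogeny with kernel $\Delta$. One then computes the right-hand side directly: using descent on the $5$-isogeny $\psi$ (equivalently, a $5$-isogeny descent on the isogenous curve), one shows $F'(\Q)/\psi F(\Q) \cong (\Z/5\Z)^2$, matching $\Sha(E/\Q)[5] \cong (\Z/5\Z)^2$. Combined with the obvious containment $\Vis_A \Sha(E/\Q) \subseteq \Sha(E/\Q)[5]$ (since multiplication by $5$ kills elements visible via a $5$-isogeny), this forces $\Vis_A \Sha(E/\Q) = \Sha(E/\Q)[5]$, proving that $A$ visualizes the entire $5$-part.

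The main obstacle is the second step, and within it the Tamagawa number condition for $F$: the found curve $F$ has very large coefficients, so the Tate algorithm must be run carefully, and bad primes of $F$ must each be checked individually. The congruence $E[5]\cong F[5]$, while conceptually clean via Rubin--Silverberg, similarly demands care in pinning down the correct twist. All remaining verifications are routine Magma computations, carried out in \texttt{magma/findCongruentCurve.m} as indicated in the excerpt.
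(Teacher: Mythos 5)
Your proposal is correct and follows essentially the same route as the paper: both apply \Cref{thm:fisher_2.2} with $d=5$ after checking the Tamagawa, good-reduction, and ramification hypotheses and the triviality of $E'(\Q)/\phi E(\Q)$ (the paper notes $E(\Q)=0$ outright), and then identify $\Vis_A\Sha(E/\Q)\cong F'(\Q)/\psi F(\Q)\cong(\Z/5\Z)^2$. The only cosmetic difference is that the paper observes $E$ and $F$ are alone in their isogeny classes, so $\phi=\psi=[5]$ and the right-hand side is just $F(\Q)/5F(\Q)$, computed via a direct Mordell--Weil computation rather than your proposed $5$-isogeny descent.
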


\begin{proof}
    Let us consider the abelian varieties $E' := A/F$, and $F' := A/E$ and the isogenies $\psi:F\to F'$ and $\phi:E\to E'$. By construction, the map $\phi$ has kernel $E[5]$, and hence $E'$ is isogenous to $E$ over $\Q$. Since $E$ is the only curve in its isogeny class, we obtain that $E'$ is equal to $E$, and therefore $\phi$ is multiplication by $5$. We similarly obtain $F = F'$, and $\psi$ is multiplication by $5$.

    We seek to apply \Cref{thm:fisher_2.2}, for which we need $E(\Q)/5E(\Q) = 0$; however this is true because $E(\Q)$ is trivial. The Tamagawa products of $E$ and $F$ are $2$ and $4$ respectively, as is readily checked in Sage. That both curves have good reduction above $5$ just follows from their conductors being coprime to $5$, also easily checked in Sage. We obtain \[ \Vis_A\Sha(E/\Q) \cong F(\Q)/5F(\Q). \] Computing the Mordell-Weil group of $F$ in Magma yields \[ \Vis_A\Sha(E/\Q) \cong (\Z/5\Z)^2 \cong \Sha(E)[5];\] i.e. $A$ visualizes all of $\Sha(E)[5]$ and hence $A$ belongs to $\mathcal{V}(E, 5)$. The minimality of $A$ is clear since it has dimension $2$.
\end{proof}

\section{Minimality of the restriction of scalars construction}\label{sec:restiction_minimality}

In this section we prove \Cref{thm:restrictionofscalars_minimal_generaln}. We begin with the following preliminary result.

\begin{lemma}\label{lemma:Sn extension}
Let $\sigma \in \Sha(E/K)$ be an element of order $n \ge 2$.
Suppose there exists an extension $L/K$ of degree $n$ such that
$C_{\sigma}(L) \neq \varnothing$, and that the normal closure $M$ of $L/K$
has Galois group $\Gal(M/K) \cong S_n$.
Then the Weil restriction $A \coloneqq \Res_{L/K}(E_L)$
is a minimal object in the visibility category
$\mathcal{V}(E/K,\sigma)$.
More generally, this is true if $\Gal(M/K)$ is a doubly transitive subgroup of $S_n$.
\end{lemma}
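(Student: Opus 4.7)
My plan is to invoke the observation from \Cref{subsec:vis_cats_nonempty} that an object $(B, \iota) \in \mathcal{V}(E/K, \sigma)$ is automatically minimal whenever the quotient $B/E$ is $K$-simple. Since the Agashe--Stein construction, combined with the hypothesis $C_\sigma(L) \neq \varnothing$, already guarantees that $A \coloneqq \Res_{L/K}(E_L)$ belongs to $\mathcal{V}(E/K,\sigma)$, it suffices to prove that the quotient $A/E$ is $K$-simple.

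The first step is to describe $A$ together with its Galois action after base-change to the normal closure $M$. By the standard formalism of Weil restriction along the separable extension $L/K$, there is a canonical $\Gal(M/K)$-equivariant isomorphism
\[
A_M \;\cong\; \prod_{\tau \colon L \hookrightarrow M} E_M \;\cong\; E_M^n,
\]
under which $\Gal(M/K) = S_n$ acts by permuting the $n$ factors through its natural action on the set of embeddings, and under which the canonical inclusion $E \hookrightarrow A$ becomes the diagonal embedding $E_M \hookrightarrow E_M^n$. Taking the quotient, $(A/E)_M$ is isogenous to $E_M^{n-1}$, and the $S_n$-action is realised as the standard $(n-1)$-dimensional representation.

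I would then rule out proper nonzero $S_n$-stable abelian subvarieties of $(A/E)_M$. Up to isogeny, such sub-AVs correspond to $S_n$-stable sub-$\End^0(E_{\bar K})$-modules of the standard representation extended by scalars, but since the standard representation of $S_n$ is absolutely irreducible for all $n \geq 2$, no nontrivial such submodule exists. Because any abelian subvariety of $A/E$ defined over $K$ base-changes to an $S_n$-stable sub-AV of $(A/E)_M$, we conclude that $A/E$ is $K$-simple, and hence $A$ is minimal in $\mathcal{V}(E/K,\sigma)$ by the invoked observation.

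The hypothesis that the Galois group of the normal closure equals the \emph{full} symmetric group $S_n$ is indispensable here: a proper subgroup of $S_n$ could preserve a nontrivial subspace of the standard permutation representation, and this would descend to a proper abelian subvariety of $A/E$ defined over $K$, obstructing simplicity. The only delicate technical point is tracking the Galois action on $A_M \cong E_M^n$ as the natural permutation action, but this is a completely standard feature of Weil restriction along separable extensions and does not constitute a genuine obstacle.
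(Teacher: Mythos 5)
Your proposal is correct and follows essentially the same route as the paper: base-change the Weil restriction to the normal closure $M$, identify the Galois action with the permutation representation of $S_n$ on $G/H$, split off the trivial summand corresponding to the diagonal $E$, and use irreducibility of the standard representation to conclude that the complement of $E$ is $K$-simple, whence minimality. The only cosmetic difference is that you work directly with the quotient $A/E$ and invoke absolute irreducibility (being slightly more careful about $\End^0(E_{\bar K})$-module structure), whereas the paper phrases the same decomposition as an isogeny $A \sim_K E \times B$ with $B$ corresponding to the $\Q$-irreducible standard representation.
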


\begin{proof}
    Let $M$ be the normal closure of $L/K$, set $G \coloneqq \Gal(M/K)$, and $H \coloneqq \Gal(M/L)$. By assumption $G \cong S_n$ and $H$ is the stabilizer of a point for the natural action of $S_n$ on $\{1,\dots,n\}$, hence $H \cong S_{n-1}$.

Let $A=\Res_{L/K}(E_L)$. After base change to $M$, the defining property of
Weil restriction gives a canonical $M$-isomorphism
\[
A_M \cong \prod_{\tau \in G/H} E_M,
\]
and the action of $G$ on $A_M$ permutes the factors through its left action on
the coset set $G/H$.

Let $W \coloneqq \Ind_H^G(\mathbf{1}_H)$
be the induced representation of $G$ of the trivial representation $\mathbf{1}_H$ of $H$. The $G$-action on the factors of $A_M$ shows that the $K$-isogeny
decomposition of $A$ is governed by the decomposition of the $\Q[G]$-module $W$, that is,
each irreducible component of $W$ corresponds to a $K$-isotypic isogeny factor of $A$.

Since $G \cong S_n$ and $H \cong S_{n-1}$, the representation $W$ is the usual
permutation representation of $S_n$ on $n$ letters. Hence $W \cong \mathbf{1}_G \oplus V$,
where $\mathbf{1}_G$ is the trivial representation and $V$ is the standard
representation on $G$ of dimension $n-1$, which is irreducible over $\Q$. Therefore,
up to $K$-isogeny,
\[
A \sim_K E \times B,
\]
where the factor $E$ corresponds to the trivial summand and the complementary
factor $B$ corresponds to $V$. Since $V$ is $\Q$-irreducible, the abelian
variety $B$ is $K$-simple.

If $G$ is any subgroup of $S_n$ and $H$ is the stabilizer in $G$ of a point, we still have a decomposition $\Ind_H^G(\mathbf{1}_H) \cong \mathbf{1}_G \oplus V|_G$. Here $V|_G$ is the restriction of the standard $(n-1)$-dimensional representation $V$ of $S_n$ to the subgroup $G$, which is known to be irreducible if and only if $G$ is doubly transitive.

Finally, the hypothesis $C_\sigma(L)\neq\varnothing$ means that $\sigma$
becomes trivial over $L$, so $\sigma$ is visible in $A=\Res_{L/K}(E_L)$; in
particular $A$ is an object of $\mathcal{V}(E/K,\sigma)$. Because the
complement of $E$ in $A$ is $K$-simple, there is no proper abelian subvariety
$A'\subsetneq A$ with $E\subset A'$ in which $\sigma$ is visible. Hence $A$ is
minimal in $\mathcal{V}(E/K,\sigma)$.
\end{proof}

The previous lemma reduces the proof of \Cref{thm:restrictionofscalars_minimal_generaln}
to the construction of a degree-$n$ extension $L/K$ such that
$C_\sigma(L)\neq\varnothing$ and whose normal closure has Galois group $S_n$
(or, more generally, is doubly transitive).
We now construct such an extension using a geometric argument based on
the degree-$n$ embedding of the torsor $C_\sigma$ and Hilbert irreducibility.

\begin{proof}[Proof of \Cref{thm:restrictionofscalars_minimal_generaln}]
    Let $C_{\sigma}$ be a genus-one torsor corresponding to $\sigma$. Then $C_{\sigma}$ has a model as a smooth degree $n$ curve in $\P^{n-1}$. Let $\Pdual^{n-1}$ denote the dual space parametrizing hyperplanes in $\P^{n-1}$. Let $Y$ be a variety parametrizing pairs consisting of a hyperplane $H \subset \P^{n-1}$ and an ordered $n$-tuple of points $\langle P_1,P_2,\ldots,P_n \rangle$, where $P_1+P_2+\ldots+P_n$ is the divisor on $C_{\sigma}$ obtained by intersection with $H$. Note that $Y \simeq C_{\sigma}^{n-1}$ since the $n-1$ points $P_i, 1 \leq i < n$ determine the hyperplane $H$ and also the last point of intersection $P_n$. Forgetting the ordered $n$-tuple of points $P_i$ gives rise to a branched covering map $Y \rightarrow \Pdual^{n-1}$. The symmetric group $S_n$ acts naturally on this cover, by permuting the points in the ordered $n$-tuple.
    Let $C_\sigma^\vee \subset (\P^{n-1})^\vee$ be the dual variety of $C_\sigma$, that is, the Zariski closure of the set of all tangent hyperplanes to $C_\sigma$. Since the dimension of $C_\sigma^\vee$ is at most $n-2$ (see \cite[Section 1.1]{Tev03}), it is a proper closed subvariety of the dual projective space $(\P^{n-1})^\vee$. In particular, a general hyperplane in $\P^{n-1}$ meets $C_\sigma$ transversely. The associated monodromy action on these points is the full symmetric group $S_n$, so the geometric Galois group of this cover is $S_n$.
    By Hilbert irreducibility, there exists a hyperplane $H$ defined over $K$, such that the permutation action on the points of intersection $H \cap C_\sigma = \{P_i, 1 \leq i \leq n\}$ has maximal image $S_n$. Let $L = K(P_1)$ be the field of definition of the point $P_1$. Then $L|K$ is a degree $n$ extension of $K$, with Galois group of normal closure $= S_n$, and $C_\sigma(L) \ne \varnothing$. By \Cref{lemma:Sn extension}, the $n$-dimensional abelian variety $A \coloneqq \Res_{L/K}(E_L)$ is minimal in the visibility category $\mathcal{V}(E/K,\sigma)$.
\end{proof}


\section{Explicit de Jong construction for orders $2$ and $3$}\label{sec:explicit_cremona_mazur_for_n_2_3}

In this section, we study the minimality of elements in the category $\V(E/K,\sigma)$ arising from the de Jong construction.

\subsection{Minimal elements via de Jong construction: case $\bf{n=2}$}
\label{subsec:explicit_cremona_mazur_for_n_2}
In this subsection, we prove \Cref{thm:explicit_cremona_mazur_for_n_2}.
\begin{proof}[Proof of \Cref{thm:explicit_cremona_mazur_for_n_2}]
Consider the quaternion algebra $\mathcal{A}_\sigma$ associated to $\sigma\in\Sha(E)[2]$, given in \cite[Section 2.2]{Fisher_alge19}, originally due to Haile and Han \cite{HaHa07}. Let $g \in K[x, z]$ be the binary quartic associated to $\sigma$, say 
\[
g(x,z)=ax^4 +bx^3z+cx^2z^2 +dxz^3 +ez^4.
\]
We may describe $\mathcal{A}$ as the Clifford algebra of the quadratic form
\[
Q_{\xi}(x,y,z)=ax^2 +bxy+cy^2 +dyz+ez^2 +\xi(y^2 -xz),
\]
where $\xi$ is the $x$-coordinate on $E$. That is, $\calA_\sigma$ is the associative algebra generated by $u_1, u_2, u_3$ subject to the relations coming from the identity $(x u_1 + y u_2 + z u_3)^2 = Q_\xi(x,y,z)$. It is known that $\calA_\sigma \simeq \calA_+ \otimes K(\eta)$, where $\calA_+$ is the even subalgebra and $\eta$ is the $y$-coordinate on $E$ (see \cite[Sections 2.1, 3.1]{Fisher_alge19}). The even sub-algebra $\calA_+$ is a quaternion algebra over $K[\xi]$, and is generated as a vector space by $1, u_1u_2, u_2u_3, u_1u_3$. It is easy to check that $(b-2u_1u_2)^2 = \Delta - 4a \xi$, where $\Delta = b^2-4ac$.
So $\mathcal{A}_\sigma$ contains the quadratic extension $L=K(E)(\sqrt{\Delta - 4a \xi})$ of $K(E)$.
By \cite[Lemma 5.4.7]{Voight}, the algebra $\mathcal{A}_\sigma \otimes L$ splits over $L$.

For any curve $X$ over $K$, we have an exact sequence of Galois modules
    $0 \rightarrow \Pic^0(X) \rightarrow \Pic(X) \rightarrow \Z \rightarrow 0$.
If $X$ has a $K$-rational point or a $K$-rational divisor of degree $1$, the sequence splits, and we get that $H^1(K,\Pic X) \simeq H^1(K,\Pic^0 X) = H^1(K, \Jac(X))$. A map $X \rightarrow E$ leads to a commutative diagram
\begin{align*}
    \xymatrix{
    0\ar[r] & \Br(K) \ar_{=}[d]\ar[r] & \Br(E) \ar[d]\ar[r] & H^1(K,\Pic E) \ar[d]\ar[r] & 0\\
    0\ar[r] & \Br(K) \ar[r] & \Br(X) \ar[r] & H^1(K,\Pic X) \ar[r] & 0.}
\end{align*}
If $X(K) \neq \varnothing$, the last vertical map is exactly the push-forward map $H^1(K,E) \rightarrow H^1(K,\Jac X)$ of our interest.
Note that we can change the class of $\calA_\sigma \in \Br(E)$ by any constant, i.e., by a class coming from $\Br(K)$, without affecting the associated torsors.
Since the quaternion algebras
\begin{align*}
    \calA_\sigma = \frac{(a,\Delta-4a\xi)}{K(E)}, \qquad \calA' = \frac{(a,\frac{\Delta-4a\xi}{-4a})}{K(E)} = \frac{(a,\xi - \frac{\Delta}{4a})}{K(E)}
\end{align*}
differ by a constant Brauer class, we deduce that $\calA'$ also represents $\sigma \in \Sha(E)[2] \subseteq H^1(K,E)$.

Let $L' = K(E)\left(\sqrt{\xi - \frac{\Delta}{4a}}\right)$ and $X$ denote the curve with function field isomorphic to $L$.
With $f$ as in the statement of \Cref{thm:explicit_cremona_mazur_for_n_2}, it is easy to see that the equation
\[
y^2=f\left(\frac{b^2 - 4a(c - x^2)}{4a} \right)
\]
gives a model of the genus $2$ curve $X$.
It is clear that the map $X \rightarrow E$ is totally ramified at the two points in $E$ with $\xi$-coordinate equal to $\frac{\Delta}{4a}$. Furthermore, since the hyperelliptic polynomial defining $X$ is monic, the two points in $X$ above $\infty$ are rational, i.e, $X(K) \ne \varnothing$. So the element $\sigma$ is visualized in the Jacobian $\Jac(X)$.
In particular, this construction gives a minimal element in the visualization category $\V(E; \sigma)$. By \cite[Theorem 14.1.1]{cassels_flynn_1996}, we deduce that $\Jac(X)$ is isomorphic to $(E \times F)/\Delta$, where $F$ is the elliptic curve given by the equation
\begin{align*}
    F : y^2 = (8a^2d - 4abc + b^3)^2 x^3 - (64a^3e - 16a^2bd - 16a^2c^2 + 16ab^2c - 3b^4) x^2 + (3b^2-8ac)x + 1
\end{align*}
and $\Delta = E[2] \simeq F[2]$ is embedded diagonally in $E \times F$.

If $E(K)/2E(K) = \{0\}$, then $E(K)[2] = \{0\}$ and hence $F(K)[2] = \{0\}$. If $\sigma \in \Sha(E)[2]$ is non-trivial, then \Cref{lemma:fisher_2.1} immediately lets us deduce that the Mordell-Weil rank of $F$ is at least $1$, since $F(K)/2F(K) \simeq \Vis_{\Jac(X)}(H^1(K,E)) \ne \{0\}$. 
\end{proof}

\subsection{Minimal elements via de Jong construction: case $\bf{n=3}$}
\label{subsec:explicit_cremona_mazur_for_n_3}
Let $0\ne\sigma\in \Sha(E/K)$ and let
\[
f(x,y,z)=ax^3+by^3+cz^3+a_2x^2y+a_3x^2z+b_1xy^2+b_3y^2z
+c_1xz^2+c_2yz^2+mxyz
\]
be a ternary cubic defined over $K$ associated with $\sigma$.
We assume that $K$ is a number field that contains the cube root of unity $\zeta_3$ and $\sqrt{d}$, where $d$ is the discriminat of the cubic polynomial $f(x,0,1)$. Consequently, we can assume that $f$ has the form
\[
f(x,y,z)=ax^3+by^3-z^3+b_1xy^2+b_3y^2z+
mxyz.
\]
Let $L=K(\sqrt[3]{a})$ and $\Gal(L|K) = \langle \tau \rangle$. Notice that $L$ is a cubic extension of $K$, as otherwise $f$ would have a root in $K$, contradicting that $\sigma\ne 0$. By \cite[Section 6.4]{Fisher_alge19}, the element
\begin{equation}\label{cubic element}
g(\xi,\eta) := \eta-\zeta_3^2m\xi-3(1-\zeta_3)ab+\frac{1}{9}(\zeta_3-\zeta_3^2)m^3
\end{equation}
is a cubic element in $\mathcal{A}_\sigma$, and as in \Cref{eq:de_jong_3_divisor}, we have  $\divi (g) = (P) + (\tau P) + (\tau^2 P) - 3(0)$ for a certain point $P \in E(L)$ with $x$-coordinate $-( 1 / 3 ) m^2 + b_1 \sqrt[3]{a} - b_3 (\sqrt[3]{a})^2$. Consider the cubic extension $M=K(E)[z]/(z^3-g(\xi,\eta))$ of $K(E)$, and denote by $X$ the associated degree $3$ cover of $E$. Notice that $\tau(P)=P$ if and only if $b_1=b_3=0$. If this condition does not hold, then the map $X\to E$ is totally ramified at $P$, whence $X$ is a de Jong curve of genus $4$.

  \begin{proof}[Proof of \Cref{thm:minimality_CremonaMazur_order3}]                      
  Fisher's database \cite{Fisher_Database_Order3Sha} provides, for each elliptic curve $E/\Q$ of conductor less than $300{,}000$, a list of ternary cubics representing the non-zero 
  elements of $\Sha(E)[3]$ up to sign. Restricting to curves      
  without a rational $3$-isogeny leaves $90{,}932$ such ternary   
  cubics. For each one, the function
  \texttt{CMcurveForIndex3Torsor} defined in
  \texttt{magma/Minimality\_CremonaMazur\_index3.m} constructs the
   de Jong curve $X$ of genus $4$ over an at-most biquadratic
  number field $K$
  using the cubic extension $M=K(E)[z]/(z^3-g(\xi,\eta))$ of $K(E)$ described above.
  We then search
  among primes less than $1000$ that are totally split in $K$, for
   a prime $p$ such that the corresponding $L$-polynomial
  $L_p(X,t)$ has an irreducible factor of degree $6$. Finding such
   a prime certifies that there are no non-trivial sub-abelian
  varieties of $\Jac(X)$ containing $E$, and thus $\Jac(X)$ is a
  minimal visualizing object. For every one of the $90{,}932$
  ternary cubics, we found such a prime witness. The witnesses are
   recorded in the \texttt{data/} directory: the original files
  from Fisher's website live in \texttt{sha\_order3\_raw/}, their
  restriction to curves without a rational $3$-isogeny in
  \texttt{sha\_order3\_no3\_iso/}, and the same files annotated
  with the prime witnesses in
  \texttt{sha\_order3\_no3\_iso\_witnessed/}. Intermediate
  per-torsor files consumed by the Magma computation sit in the
  two \texttt{sha\_order3\_processed} variants.
  \end{proof}

  \begin{remark}
      The full computation is reproducible from a clean checkout
  via the script \texttt{run\_pipeline.sh} at the repository root.
   It (i) downloads the raw data from Fisher's website, (ii)
  filters to curves without a rational $3$-isogeny using the      
  Cremona labels in \texttt{data/cremona\_labels\_no\_3\_iso.txt},
   (iii) processes the filtered files into per-torsor input, (iv)
  runs \texttt{magma/PrimeWitnesses.m} in parallel over the
  processed files, and (v) merges the resulting prime witnesses
  back into the files in
  \texttt{data/sha\_order3\_no3\_iso\_witnessed/}. Running the
  pipeline requires Python~3, GNU~\texttt{parallel}, and Magma.
  \end{remark}

\begin{remark}                                   The restriction to curves $E$ without a rational $3$-isogeny is essential for our approach to work.  When such an isogeny $\phi\colon E\to E'$ exists, the de Jong Jacobian $\Jac(X)$ contains $E'$ as a subvariety, so the characteristic polynomial of Frobenius always factors through degree at most~$4$ and no prime witness can exist. 
\end{remark}

\begin{remark}
    \label{rem:diagonal_torsors}
    Let $E$ be an elliptic curve over any number field $K$, and let $0 \ne \sigma \in H^1(K,E)[3]$ be represented by a ternary cubic $a x^3 + b y^3 - z^3 + m xyz$. Then we deduce that the divisor of the function $g \in K(\zeta_3)(E)$ given in \Cref{cubic element} is $3 (P) - 3 (0)$, where $P = (-\frac{m^2}{3}, 3(1-\zeta_3)ab + \frac{1}{9}(\zeta_3 + 2)m^3)$. This shows that $P$ is a $3$-torsion point in $E$, and the subgroup generated by $P$ is isomorphic to $\mu_3$. Moreover, the associated map $C \rightarrow E$ is unramified, so that the genus of $C$ is also $1$. Hence, the induced map on Jacobians is an isogeny $\phi: E \rightarrow \Jac(C)$ of degree $3$ with kernel generated by $P$, and the torsor $\sigma$ is visualized by the isogeny $\phi$. In other words, $\sigma$ is obtained from a class in $H^1(K,\ker \phi) \simeq H^1(K,\mu_3) \simeq K^{\times}/{K^\times}^3$ by the natural map $H^1(K,\ker \phi) \rightarrow H^1(K,E)$.
\end{remark}

\section{de Jong curves need not yield minimal visualizations, and the difference between $\V(E,\sigma)$ and $\V(E,\ord(\sigma))$}\label{sec:cremona_mazur_curves_can_have_large_genus}

In this section we establish \Cref{thm:cremona_mazur_curves_can_have_large_genus,thm:categories_not_equal}.

Let $E$ be the elliptic curve \href{https://www.lmfdb.org/EllipticCurve/Q/571/b/1}{571.b1}. We have $E(\Q)=\{0\}$, $\Sha(E/\Q) \cong (\Z/2\Z)^2$, with the elements of $\Sha(E/\Q)$ represented by the binary quartics $\psi_i, 1 \leq i \leq 4$ shown in \Cref{eq:binary_quartics_example}.
Let us denote by $\calA_{\psi_i}$ the Azumaya algebra of rank $4$ over $\Q(E)$ associated with $\psi_i$. Let $C_i$ denote the associated hyperelliptic curve of genus $2$, given in \Cref{thm:explicit_cremona_mazur_for_n_2}. Then we have the following descriptions of $\calA_{\psi_i}$ as quaternion algebras:
\begin{align*}
&\calA_{\psi_1} =\left(\frac{-1,x-7}{\Q(E)}\right),\quad
\calA_{\psi_2} =\left(\frac{-11,11x-76}{\Q(E)}\right),\\
&\calA_{\psi_3} =\left(\frac{-15,15x-104}{\Q(E)}\right),\quad
\calA_{\psi_4} =\left(\frac{-19,19x-132}{\Q(E)}\right).
\end{align*}

\begin{proof}[Proof of \Cref{thm:cremona_mazur_curves_can_have_large_genus}]
In the quaternion algebra $\calA_{\psi_1}$ we have the following identity:
\[g(x)\coloneqq x^3 - 26x^2 + 233x - 704=((x-5)i+(x-9)j+4k)^2.\]
This was found by a search as described below. The square of the quaternion $ai + bj + ck$ is $-a^2+(x-7)b^2-(x-7)c^2$. For this to have degree $3$ polynomial, the polynomials $a, b, c \in K[x]$ must have degree $\leq 1$.
We then searched for appropriate coefficients so that the elliptic curve $y^2 = g(x)$ has rank $0$ and trivial two-torsion.

Consider the quadratic extension of $\Q(E)$ given by $L=\Q(E)(\sqrt{g(x)})$. The de Jong construction associates a double cover $X \rightarrow E$, ramified at the zeros of $g$, whose Jacobian visualizes $\sigma_1$. By Riemann-Hurwitz, we know that the genus of $X$ is 4. Consider the elliptic curve
$E_1\colon y^2=g(x)$,
and the hyperelliptic curve 
$X_1\colon y^2=g(x)f_E(x)$,
where $f_E(x) = x^3-1204416x-508759920$ is a polynomial defining a short Weierstrass model of $E$.
Since $L$ is a biquadratic extension of the rational function field $\Q(x)$, we have the diagram of subfields and corresponding maps between curves
\begin{center}
    \begin{tikzpicture}
        \node (K) at (0,2) [minimum size=6mm] {$\Q(E)(\sqrt{g(x)})$};
        \node (A) at (-2,1) [minimum size=6mm] {$\Q(E)$};
        \node (B) at (0,1) [minimum size=6mm] {$\Q(E_1)$};
        \node (C) at (2,1) [minimum size=6mm] {$\Q(X_1)$};
        \node (Q) at (0,0) [minimum size=6mm] {$\Q(x)$};
        
        \node (K1) at (6,2) [minimum size=6mm] {$X$};
        \node (A1) at (4,1) [minimum size=6mm] {$E$};
        \node (B1) at (6,1) [minimum size=6mm] {$E_1$};
        \node (C1) at (8,1) [minimum size=6mm] {$X_1$};
        \node (Q1) at (6,0) [minimum size=6mm] {$\P^1$};
        
        \draw (Q) -- (A);
        \draw (Q) -- (B);
        \draw (Q) -- (C);
        \draw (A) -- (K);
        \draw (B) -- (K);
        \draw (C) -- (K);

        \draw[->] (A1) -- (Q1);
        \draw[->] (B1) -- (Q1);
        \draw[->] (C1) -- (Q1);
        \draw[->] (K1) -- (A1);
        \draw[->] (K1) -- (B1);
        \draw[->] (K1) -- (C1);
    \end{tikzpicture}
\end{center}
which give the isogeny decomposition $\Jac(X) \sim E\times E_1\times \Jac(X_1)$. Note that the isogeny is constructed by gluing the two abelian surfaces $\Jac(X_1)$ and $E \times E_1$ along $2$-torsion, using the $G_\Q$-isomorphism $\Jac(X_1)[2]\cong E[2]\times E_1[2]$.

Let $T$ be the submodule of $\Jac(X_1)[2]$ isomorphic to $E[2]$. Let $\Delta \subset E[2]\times \Jac(X_1)[2]$ be the graph of this isomorphism, and $A$ denote the subabelian variety $E\times \Jac(X_1)/\Delta$ of $\Jac(X)$. We claim that
$$\Vis_{A} H^1
(K, E)= \Vis_{\Jac(X)} H^1
(K, E),$$ 
which implies that $\sigma_1\in\Vis_{A} H^1
(K, E)$, and consequently $\Jac(X)$ is not minimal in $\V(E,\sigma_1)$. By \Cref{obs1}, it suffices to prove that
$$\Vis_{\Jac(X)} H^1
(K, A)=(0).$$

Consider the diagram 
\[
    \xymatrix{&&0\ar[d]&&\\
    &&E_1\ar[d]\ar^{\psi}[dr]&&\\
    0\ar[r]& A\ar_{\phi}[dr]\ar[r]&\Jac(X)\ar[d]\ar[r]&E_1'\ar[r]&0\\
    &&A'\ar[d]&&\\
    &&0,&&}
    \]
where $E_1'= \Jac(X)/A$ and $A' = \Jac(X)/E_1$. The row and column are exact sequences of abelian varieties, and the diagonal maps $\phi$
and $\psi$ are isogenies. We know that the kernel of the isogeny $\psi$ is $E_1[2]$, and thus $\psi = \pm 2$. As in the proof of {\cite[Lemma 2.1]{Fisher_vis7}} we have the following exact sequence:
\begin{equation}\label{Fisher ES}
0 \longrightarrow
\frac{\Jac(X)(\Q)}{E_1(\Q)+A(\Q)}
\longrightarrow
\frac{E_1'(\Q)}{\psi E_1(\Q)}
\longrightarrow \Vis_{\Jac(C)} H^1
(K, A) \longrightarrow 0.    
\end{equation}
Since $E_1$ is the elliptic curve \href{https://www.lmfdb.org/EllipticCurve/Q/26/a/3}{26.a3}, we know $E_1(\Q)\cong\Z/3\Z$, and consequently we obtain that
\[
\frac{E_1'(\Q)}{\psi E_1(\Q)} = \frac{E_1(\Q)}{2 E_1(\Q)} = (0).
\]
By the exact sequence \eqref{Fisher ES}, we obtain the desired result, which proves \Cref{thm:cremona_mazur_curves_can_have_large_genus}.
\end{proof}

\begin{proof}[Proof of \Cref{thm:categories_not_equal}]
By \Cref{thm:explicit_cremona_mazur_for_n_2} applied to the binary quartic model $\psi_2$ of $\sigma_2$, we know that $C_2$ is the hyperelliptic curve given by the equation
\[
y^2=1/1331 x^6 - 2324/1331 x^4 + 1136/1331 x^2 - 144/1331.
\]
The Jacobian of $X_2$ is isogenous to the product $E\times F$, where $F$ is the elliptic curve \href{https://beta.lmfdb.org/EllipticCurve/Q/27408/d/1}{27408.d1}. We have $F(\Q)\cong \Z$.
By \Cref{lemma:fisher_2.1}, $\#\Vis_{\Jac(X_2)} H^1
(\Q, E)=\#(F'(\Q)/\psi F(\Q))=\#(F(\Q)/2F(\Q)=2$. Furthermore, by construction, the element $\sigma_2\in \Sha(E/\Q)$ associated with $\psi_2$ is in $\Vis_{\Jac(X_2)}(\Sha(E/\Q))$. Consequently,
\[
2=\#\langle \sigma_2\rangle\leq \#\Vis_{\Jac(X_2)}(\Sha(E/\Q))\leq \#\Vis_{\Jac(X_2)} H^1
(\Q, E)=2,
\]
which implies that $\sigma_2$ is the only nontrivial element visualized by $\Jac(X_2)$. We have constructed an abelian variety that visualizes a nontrivial element of $\Sha(E/\Q)$ but not the whole group. In other words, $\Jac(X_2)\in \V(E,\sigma_2)$, but $\Jac(X_2)\notin \V(E,2)$.
\end{proof}

\begin{example}
    Continuing with the notations from the beginning of \Cref{sec:cremona_mazur_curves_can_have_large_genus}, let us look more carefully at the explicit visualization construction of \Cref{thm:explicit_cremona_mazur_for_n_2} for the binary quartic model $\psi_1$ of $\sigma_1 \in \Sha(E)$.
    Then $X_1$ is the hyperelliptic curve 
    \[
    y^2=1/64 x^6 - 211/16 x^4 - 29/4 x^2 - 1,
    \]
    which is isomorphic to the curve with LMFDB label \href{https://beta.lmfdb.org/Genus2Curve/Q/326041/a/326041/1}{326041.a.326041.1}. The Jacobian of $X_1$ is isogenous to the product $E\times F$, where $F$ is the elliptic curve \href{https://www.lmfdb.org/EllipticCurve/Q/571/a/1}{571.a1}. We have $F(\Q)\cong \Z^2$, and the minimal discriminants of $E$ and $F$ are equal to $-571$.
    While \Cref{thm:fisher_2.2} cannot be applied because $p=2$, Fisher already shows in \cite[Section 15.1]{Fisher_hess12} that $F(\Q)/2F(\Q) \simeq \Sha(E)[2]$.
    We can also prove this by a straightforward computation in $\mathcal{A}_{\psi_2}$:
    \[
    \left(\frac{1}{11}(i+k)\right)^2=\frac{1}{121}(i^2+k^2)=\frac{1}{121}(-11+11(11x-76))=x-7.
    \]
    By \cite[Lemma 5.4.7]{Voight}, the algebra $\mathcal{A}_{\psi_2}\otimes \Q(E)(\sqrt{x-7})$ splits. This implies that the element $\sigma_2\in \Sha(E/\Q)$ associated to $\psi_2$ lies in $\Vis_{\Jac(X_1)}(\Sha(E/\Q))$. By construction $\sigma_1 \in \Vis_{\Jac(X_1)}(\Sha(E/\Q))$. Consequently, $\Jac(X_1)\in \V(E,2)$.
\end{example}

This example raises a concrete question about uniform visibility of $\Sha(E)[2]$.
\begin{question}
\label{question:uniform_visibility}
    Let $E/\Q$ be an elliptic curve with $\Sha(E)[2] \ne \{0\}$. Suppose $\Sha(E)[2] \simeq (\Z/2)^n$. Let $\psi_i$ for $1 \leq i \leq 2^n-1$ denote minimised and reduced binary quartic models representing the non-zero elements of $\Sha(E)[2]$. Then, does there exist an $i$ such that the genus $2$ Jacobian constructed from $\psi_i$ in \Cref{thm:explicit_cremona_mazur_for_n_2}, visualizes all of $\Sha(E)[2]$?
\end{question}

\printbibliography

\end{document}